\documentclass[12pt,reqno]{amsart}
\usepackage[a4paper]{geometry}
\usepackage{amsthm,hyperref,lmodern,mathrsfs,amssymb,amsmath,amsfonts}
\usepackage{esint}
\usepackage{enumerate}
\usepackage[utf8]{inputenc}
\usepackage{versions}

\usepackage{color}

\allowdisplaybreaks

\numberwithin{equation}{section}

\newcommand{\Hm}[1]{\leavevmode{\marginpar{\tiny%
$\hbox to 0mm{\hspace*{-0.5mm}$\leftarrow$\hss}%
\vcenter{\vrule depth 0.1mm height 0.1mm width \the\marginparwidth}%
\hbox to 0mm{\hss$\rightarrow$\hspace*{-0.5mm}}$\\\relax\raggedright
#1}}}



\newtheorem{thm}{Theorem}[section]
\newtheorem{lem}[thm]{Lemma}
\newtheorem{prop}[thm]{Proposition}
\newtheorem{cor}[thm]{Corollary}

\newtheorem{rmk}[thm]{Remark}

\def\R{{\mathbb{R}}}

\def\M{{\mathcal{M}\ \!\!}}
\def\D{{\mathcal{D}\ \!\!}}


\def\M{{\mathcal{M}\ \!\!}}

\def\Hess{{\mathrm{{\rm Hess }}}}

\def\and{{\mathrm{{\rm and}}}}

\def\fra{\mathfrak{a}}
\def\frb{\mathfrak{b}}

\def\o{{\omega\ \!\!}}

\def\Dtp{{\Delta_{\perp}}}
\def\Dtf{{\overset{\rightarrow}{\Delta}}}

\def\beq{\begin{equation}}
\def\eeq{\end{equation}}

\begin{document}

\title[The spectral gap]{ Lower bounds for the  spectral gap and an extension of the Bonnet-Myers theorem}

\author{Michel~Bonnefont and El Maati~Ouhabaz} \address{ Institut de Math\'ematiques de Bordeaux, Universit\'e Bordeaux, UMR CNRS 5251, 351 Cours de la Lib\'eration 33405, Talence.  France} \thanks{The authors are  partially supported by the ANR project RAGE: ANR-18-CE-0012-01.\\
M.B. is also partially supported by the ANR project QuAMProcs: ANR-19-CE40-0010.} 
\email{michel.bonnefont@math.u-bordeaux.fr}
\email{Elmaati.Ouhabaz@math.u-bordeaux.fr}




\begin{abstract} {On a fairly general class of  Riemannian manifolds ${\mathcal M}$,  we prove lower  estimates in terms of the Ricci curvature for the spectral bound (when  $\mathcal M$ has infinite volume) and for the spectral gap (when ${\mathcal M}$ has finite volume) for the Laplace-Beltrami operator. As a byproduct of our results we obtain an extension of the Bonnet-Myers theorem on  the compactness of the manifold. We also prove lower bounds for the spectral gap for Ornstein-Uhlenbeck type operators on weighted manifolds. As an application we prove lower bounds for the spectral gap of some probability measures  $e^{-V(x)} dx$ with a potential $V$ which is radial only inside or outside a certain ball of $\R^n$.}
 \end{abstract}
 
\vspace{1cm}

\maketitle

\noindent{\bf  Keywords}: Schr\"odinger operators, Hodge-de Rham Laplacians, the  spectral bound, the spectral gap, Ornstein-Uhlenbeck type operators, perturbations of radial measures. 


 \vspace{1cm}
 \tableofcontents

\newpage

\section{Introduction and the main results}\label{sec1}

The primary aim of the present paper is to  understand the link between positivity of the spectral bound (respectively, the spectral gap) of the Laplace-Beltrami operator for manifolds with infinite volume (respectively, for manifolds with finite volume) and the Ricci curvature. This is a subject that has been investigated by many authors. The results in the literature often use lower bounds of the Ricci curvature. In the  present paper, we will see that a mean condition on the smallest eigenvalue of the Ricci tensor can be used in an efficient way in order to treat such questions. 

Let $\mathcal M$ be a complete Riemannian manifold. We denote by $B(x,r)$, for $x \in {\mathcal M}$ and $r > 0$, the open ball of centre $x$ and radius $r$. As usual, $\Delta$ denotes the non-negative Laplace-Beltrami operator and $\nabla$ the gradient. The main two properties we need for ${\mathcal M}$ are the standard covering by balls with finite overlapping together with the $L^2$-Poincar\'e inequality on balls.  More precisely,\\

$(A1)$ {\it The covering property.} For any $R >0$, there exist a sequence $(x_k) \in {\mathcal M}$ and a number $N_R$ such that 
${\mathcal M} = \cup_k B(x_k, R)$ and each $x \in {\mathcal M}$ is contained in at most $N_R$ balls $B(x_k,R)$.\\

$(A2)$ {\it Poincar\'e on balls.} For every $R > 0$, there exists a constant $\kappa_R$ such that  
\begin{equation}\label{poincare}
  \kappa_R  \int_B | u - u_B |^2\, dx \le \int_B |\nabla u |^2\, dx
\end{equation}
for every ball $B = B(y,R)$ and $u \in H^1(B)$. Here $u_B := \frac{1}{vol(B)} \int_B u\, dx := \fint_B u\, dx$ is the mean of $u$ on $B$.\\

It is a well known fact that $(A1)$ and $(A2)$ are both satisfied if the manifold has Ricci curvature bounded from below by $-k^2$. In this case, the Poincar\'e inequality \eqref{poincare} holds with $\kappa_R = Ce^{-k^2 R} R^{-2}$ for all $R > 0$.  See Buser \cite{Buser} or  Saloff-Coste \cite{Saloff}, Theorem 5.6.5. 


We denote by $s(\Delta)$ the spectral bound of $\Delta$, that is
$$ s(\Delta) = \inf \sigma(\Delta),$$
the infimum of its spectrum $\sigma(\Delta)$. If $\M$ is compact then 
$\sigma(\Delta)$ consists of a discrete set of eigenvalues $ (\lambda_k)_{k\ge0}$.  In addition,  $\lambda_0 = 0$ and $\lambda_1 > 0$. 
For general manifolds with $s(\Delta) = 0$, one defines 
$$ \lambda_1 = \inf  \left[ \sigma(\Delta)\setminus \{0\}\right].$$
If $\lambda_1 > 0$, one says that $\Delta$ (or $\M$) has a spectral gap (and $\lambda_1$ is the value of the spectral gap). It is of great interest to decide whether a Riemannian manifold $\M$ has a spectral gap and it is more interesting to have qualitative lower bounds for $\lambda_1$.
The spectral gap gives the rate of convergence of the process towards its stationary state. Hence it is of interest in probability theory and it is also related to several functional inequalities. See Ledoux \cite{Ledoux1} or the monograph by Bakry, Gentil and Ledoux \cite{Bakry-Gentil-Ledoux} for an overview and historical notes. 

 We prove lower bounds for $s(\Delta)$ and for the spectral gap.  Our bounds are given  in terms of the smallest eigenvalue
$\rho(x)$ of the Ricci tensor  $Ricci(x)$ at $x \in \M$. The function $\rho$ can of course be written as $\rho^+ - \rho^-$ with $\rho^+$ and $\rho^-$ are its positive and negative parts, respectively. We introduce the following average condition on $\rho^+$ and Hardy type inequality on $\rho^-$. 
\begin{equation}\label{meanR}
\delta(R) := \inf_{ p \in \M}  \fint_{B(p,R)} \rho ^+(x) dx  > 0.
\end{equation}
\begin{equation}\label{formbdd}
\int_{\M} \rho ^{-} | u |^2 \le \alpha \left[ \int_{\M} | \nabla u |^2 + \int_{\M} \rho ^+ | u |^2 \right] \quad \forall u \in H^1(\M).
\end{equation} 

We give in the following proposition an overview of our bounds. More general  statements  will be given in the coming sections. 

\begin{prop} \label{prop1} 
Let $\M$ be a complete Riemannian manifold. Suppose  that  $\rho ^-$  is form bounded in the sense of \eqref{formbdd} for some $\alpha \in [0, 1)$. 
\begin{itemize}
\item If $vol({\mathcal M}) = \infty$, then $s(\Delta) \ge s(\Delta + \rho)$. In particular, if $\rho \ge 0$ then $s(\Delta) = s(\Delta + \rho )$.
\item If  $vol({\mathcal M}) < \infty$, then  $s(\Delta) = 0$ and $\lambda_1(\Delta) \ge s(\Delta + \rho)$. 
\end{itemize}
Suppose now that $\M$ satisfies the covering property $(A1)$ and the Poincar\'e inequality $(A2)$. Suppose that  $\rho ^+$  is bounded and satisfies the average condition \eqref{meanR} below for some $R > 0$.  
\begin{itemize} 
\item  If $vol(\M) = \infty$, then 
\[
s(\Delta) \ge \, \sup_{R > 0}\,    \frac{1-\alpha}{N_R} \frac{ \kappa_R}{\kappa_R + 2 \| \rho^+ \|_{L^\infty(\M)}} \delta(R)
\]
 where $N_R$ and $\kappa_R$ are the constants  in $(A1)$ and $(A2)$. 
  \item If 
$vol(\M) < \infty$, then $\Delta$ has a spectral gap with the lower bound
\[
\lambda_1(\Delta) \ge \, \sup_{R > 0}\,    \frac{1-\alpha}{N_R} \frac{\kappa_R}{\kappa_R + 2  \| \rho^+ \|_{L^\infty(\M)}} \delta(R). 
\]
\end{itemize}
\end{prop}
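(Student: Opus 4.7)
The plan is to reduce both parts of the statement to a lower bound for the spectral bound $s(\Delta+\rho)$ of the Schr\"odinger operator on $\M$ via the Bochner identity and Kato's inequality, and then to bound $s(\Delta+\rho)$ quantitatively by combining $(A1)$, $(A2)$ and the average condition \eqref{meanR}.

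For the first bullet item I start from the integrated Bochner identity
$$\int_\M (\Delta f)^2\,dx \;=\; \int_\M |\Hess f|^2\,dx+\int_\M \Ric(\nabla f,\nabla f)\,dx,\qquad f\in C_c^\infty(\M),$$
set $u=|\nabla f|$, and apply Kato's inequality $|\nabla u|^2\le|\Hess f|^2$ together with $\Ric(\nabla f,\nabla f)\ge \rho\,u^2$. This gives
$$\int_\M (\Delta f)^2\,dx \;\ge\; \int_\M |\nabla u|^2\,dx+\int_\M \rho\,u^2\,dx \;\ge\; s(\Delta+\rho)\int_\M u^2\,dx,$$
where the last step uses the variational characterization of $s(\Delta+\rho)$ (well defined thanks to the form-boundedness of $\rho^-$ with $\alpha<1$, via KLMN). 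Since $\int_\M u^2=\int_\M |\nabla f|^2$, dividing and taking the infimum over $f$ yields $s(\Delta)\ge s(\Delta+\rho)$ in the infinite-volume case, using the identity $\inf_f \int(\Delta f)^2/\int|\nabla f|^2=s(\Delta)$ obtained by substituting $g=\sqrt{\Delta}f$ and invoking the spectral theorem. For finite volume, $s(\Delta)=0$ is achieved by constants, and running the same argument on a $\lambda_1$-eigenfunction (necessarily orthogonal to constants) yields $\lambda_1(\Delta)\ge s(\Delta+\rho)$; when $\rho\ge 0$ the reverse inequality $s(\Delta+\rho)\ge s(\Delta)$ is trivial.

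For the quantitative estimate, it is enough to prove the Schr\"odinger-type global Poincar\'e inequality
$$\int_\M u^2\,dx \;\le\; \frac{N_R(\kappa_R+2\|\rho^+\|_\infty)}{\kappa_R\,\delta(R)}\int_\M \bigl(|\nabla u|^2+\rho^+u^2\bigr)\,dx,$$
since multiplying by $(1-\alpha)$ and using the form bound $\int\rho^-u^2\le \alpha\int(|\nabla u|^2+\rho^+u^2)$ then gives
$$\int_\M (|\nabla u|^2+\rho\,u^2)\,dx \;\ge\; \frac{(1-\alpha)\kappa_R\,\delta(R)}{N_R(\kappa_R+2\|\rho^+\|_\infty)}\int_\M u^2\,dx,$$
and taking the supremum over $R>0$ yields the claim. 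To prove this global estimate I work ball by ball on the cover from $(A1)$: for $B_k=B(x_k,R)$, the Poincar\'e inequality $(A2)$ gives
$$\int_{B_k}u^2\,dx \;\le\; \kappa_R^{-1}\int_{B_k}|\nabla u|^2\,dx+vol(B_k)\,u_{B_k}^2.$$
To handle the mean term I expand $u_{B_k}^2\le 2u^2+2(u-u_{B_k})^2$ pointwise, multiply by $\rho^+$, integrate on $B_k$, and use $\fint_{B_k}\rho^+\,dx\ge\delta(R)$ (read from \eqref{meanR}) and $\|\rho^+\|_\infty$, together with one more application of $(A2)$, to bound $vol(B_k)u_{B_k}^2$ by a combination of $\int_{B_k}\rho^+u^2$ and $\int_{B_k}|\nabla u|^2$. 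Summing over $k$ and using the finite overlap $N_R$ from $(A1)$ produces the desired inequality.

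The main obstacle is the Kato step: $u=|\nabla f|$ is only Lipschitz and is not $C^2$, so the chain Bochner $\to$ Kato $\to$ variational quotient should be justified by regularizing through $u_\eps=\sqrt{|\nabla f|^2+\eps^2}$, verifying admissibility of $u_\eps$ in the closed form domain of $\Delta+\rho$, and passing to the limit $\eps\to 0^+$. A related technical point is that one must use the form-boundedness of $\rho^-$ with $\alpha<1$ to ensure $\Delta+\rho$ is self-adjoint and bounded below (so that $s(\Delta+\rho)$ equals the Rayleigh quotient). Finally, obtaining the precise algebraic shape $\kappa_R/(\kappa_R+2\|\rho^+\|_\infty)\cdot\delta(R)$ requires some care in choosing the pointwise bound for $u_{B_k}^2$, since naive constants $2$ produce the right structure but may need optimization to match the stated coefficients exactly.
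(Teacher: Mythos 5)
Your outline is correct in spirit and, for the first half, genuinely different from the paper. The paper obtains $s(\Dtp)\ge s(\Delta+\rho)$ via the intertwining $e^{-t\Dtf_{\overline d}}=R\,e^{-t\Dtp}R^*$ of the Riesz transform with the Hodge Laplacian on exact $1$-forms, which yields $\sigma(\Dtp)=\sigma(\Dtf_{\overline d})$, followed by the variational inequality $s(\Dtf_{\overline d})\ge s(\Dtf)$ and the semigroup domination $|e^{-t\Dtf}\omega|\le e^{-t(\Delta+\rho)}|\omega|$. You bypass the Hodge/Riesz machinery entirely by working at the quadratic-form level: the identity $\inf_f\|\Delta f\|_2^2/\|\nabla f\|_2^2=s(\Dtp)$ (valid on $\ker(\Delta)^\perp$, which is all of $L^2$ for infinite volume and $L^2_\perp$ for finite volume) is precisely the statement $s(\Dtp)=s(dd^*|_{\overline{Im(d)}})$ in disguise, and the integrated Bochner identity plus the Kato inequality $|\nabla|\nabla f||\le|\mathrm{Hess}\,f|$ replaces the semigroup domination argument. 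The two routes encode the same geometric input (Weitzenb\"ock plus Kato); your version is more elementary and self-contained, while the paper's intertwining gives the stronger conclusion $\sigma(\Dtp)=\sigma(\Dtf_{\overline d})$ as a byproduct. The technical points you flag (regularizing $u=|\nabla f|$ by $u_\varepsilon=\sqrt{|\nabla f|^2+\varepsilon^2}$; noting for finite volume that $\lambda_1$ may not be attained, so one should run the Rayleigh-quotient argument over $f\perp 1$ rather than on a hypothetical eigenfunction) are real and must be executed, but they are standard.

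For the quantitative bound, your ball-by-ball strategy is the same as the paper's, but the precise constant $\kappa_R/(\kappa_R+2\|\rho^+\|_\infty)\cdot\delta(R)$ will not come out of the pointwise inequality $u_{B_k}^2\le 2u^2+2(u-u_{B_k})^2$ — as you correctly anticipate. Carrying out your estimate yields coefficients $N_R(\delta(R)+2\|\rho^+\|_\infty)/(\kappa_R\delta(R))$ for $\int|\nabla u|^2$ and $2N_R/\delta(R)$ for $\int\rho^+u^2$, which match the claimed constant only up to uncontrolled factors depending on the relative sizes of $\kappa_R$, $\delta(R)$ and $\|\rho^+\|_\infty$. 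The paper's route is the Fefferman--Phong argument: use the mean-value identity $\frac{1}{2|B|}\iint_{B\times B}|u(x)-u(y)|^2\,dxdy=\int_B|u-u_B|^2\,dx$, add $V(y)|u(y)|^2$ under the double integral, expand the square, and bound the cross term by the \emph{weighted} AM--GM
\[
\lambda_1\,u(x)u(y)\ \le\ \frac{\lambda_1^2}{4}\,\frac{|u(x)|^2}{\frac{\lambda_1}{2}+V(y)}+|u(y)|^2\Bigl(\tfrac{\lambda_1}{2}+V(y)\Bigr),
\]
which produces the factor $\fint_B V(y)/(\frac{\lambda_1}{2}+V(y))\,dy$ and hence exactly $\kappa_R/(\kappa_R+2\|V\|_\infty)\cdot\delta(R)$ after bounding the denominator by $\frac{\kappa_R}{2}+\|V\|_\infty$. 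Replacing your splitting of $u_{B_k}^2$ by this weighted AM--GM is the missing piece to match the stated coefficients.
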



Let us recall some related results. It is known that if the volume grows exponentially in every direction then $s(\Delta) > 0$. This exponential growth holds if the sectional curvature is negative  at infinity. See Sturm \cite{Sturm}, Proposition 2 and the discussion on p. 445. Concerning the spectral gap, the literature on this subject is extensive and we quote only few papers and refer to the references there. By the well known Lichnerovicz' lower bound for a manifold of dimension $n$  satisfying $Ricci \ge k > 0$, one has 
$\lambda_1 \ge \frac{n}{n-1}k$. Li-Yau's lower bound, improved by Zhong and Yang \cite{ZY84} for compact manifolds with non-negative Ricci curvature and diameter $d$, provides  $\lambda_1 \ge \frac{\pi^2}{d^2}$.  Note also that  Lichnerovicz lower bound can be obtained via the Bakry-\'Emery criterion in a general setting. See the curvature-dimension criterion  $CD(k,n)$ in \cite{Bakry-Gentil-Ledoux}, Theorem 4.8.4.\\
 For manifolds with Ricci curvature that can have a non-trivial negative part, Aubry \cite{aubry} proved a lower bound in terms of the dimension $n$, $vol(M)$ and the quantity on the left-hand side of \eqref{Aubry} below. In \cite{Carron-Rose} Carron and Rose proved some lower bounds for $\lambda_1$ using Kato type assumptions on $\rho$ or some means of perturbations of $\rho^-$. Our lower bounds in Proposition \ref{prop1} (see also Theorem \ref{thm-spectral}) are of different nature and are expressed in terms of the mean of $\rho$ on balls of the manifold. 

Let us now give some ideas of proof of our lower bounds. Let $\Dtp$ be either $\Delta$ if $vol(M) = \infty$ or $\Delta(I-\Pi)$ with $\Pi$ being the projection onto the null set of $\Delta$ if $vol(M) < \infty$. Next, set $\Dtf = d d^* + d^*d$ be the Hodge-Laplacian on 
$1$-differential forms and $\Dtf_{\overline{d}} = dd^*$ its realization on the closure of exact forms. We shall see that one has the intertwining  property
\[ e^{-t \Dtf_{\overline{d}}} = R\, e^{-t\Dtp}\, R^*
\]
where $R$ is the Riesz transform $d \Dtp^{-\frac{1}{2}}$. One obtains from this  $\sigma(\Dtf_{\overline{d}}) = \sigma(\Dtp)$. This equality of the spectra may be  known to experts (at least in some specific situations) but we could not find a reference where it is written explicitly. Thus, we  give the full details both for the reader's convenience and also because we prove such equality for any complete manifold. A particular case of the above equality is that  $s(\Dtp) = s(\Dtf_{\overline{d}})$. On the other hand, by the standard minimisation of the spectral bound, it is clear that $s(\Dtf_{\overline{d}}) \ge s(\Dtf)$. On applying the pointwise domination
\begin{equation}\label{0-dom}
 | e^{-t\Dtf} \o | \le e^{-t(\Delta + \rho)} | \o | 
 \end{equation}
it follows that $s(\Dtp) \ge s(\Delta + \rho)$. This is the inequality stated in the first two assertions of Proposition \ref{prop1}. One of the gains with this inequality is that any lower estimate for the spectral bound of the Schr\"odinger operator $\Delta + \rho$ is also a lower bound for $s(\Dtp)$. For Schr\"odinger operators on Riemannian manifolds, such lower estimate where proved by Ouhabaz \cite{Ouhabaz01} in terms of the mean $\delta(R)$. In Section \ref{sec3} we consider again this question, we give some historical information and produce lower bounds which appear in the last two assertions of Proposition \ref{prop1}.  


Recall that the Bonnet-Myers theorem asserts that if the Ricci curvature is uniformly bounded from below by a positive constant then the manifold is compact. We prove  the following extensions. We refer to  Section \ref{sec5} for some additional  comments. We denote by $n$ the dimension of the manifold $\M$. 

\begin{thm}\label{thm2}
Suppose that the manifold  $\M$  satisfies $(A1)$ and  $(A2)$ and  the volume of balls has a sub-exponential  growth. Suppose that $\inf \{ \rho ^+, w_0\}$ satisfies the average condition \eqref{meanR} for some $R > 0$ and some constant $w_0>0$ and that $\rho ^-$ satisfies \eqref{formbdd} for some $\alpha \in [0,1)$. 
Then $\M$ has finite volume. In addition, if for some $x_{0} \in \M$,  
\begin{equation}\label{Gromov}
 Ricci(x) \ge -\frac{n}{n-1} \frac{1}{d(x,x_0)^2} 
 \end{equation}
when  $d(x,x_{0})$ is large enough, then $\M$ is compact.  
\end{thm}

Recall that the volume of balls $B(x,r)$ has sub-exponential growth if for some $x_0 \in \M$
\begin{equation}\label{vol-exp}
\limsup_{r \to \infty} \frac{1}{r} \log [vol(B(x_0,r))] = 0.
\end{equation}
Note that if $\rho ^-(x)$ converges to $0$ when the distance  $d(x,x_0)$ (from a fixed point $x_0$ of $\M$) tends  to infinity, then the volume 
has sub-exponential growth (cf. \cite{Sturm}).  

For manifolds whose Ricci curvature is asymptotically non-negative we can remove the assumption  \eqref{formbdd} on the negative part of $\rho$. Before giving the statement, let us say that the  Ricci curvature $Ricci$ is asymptotically  non-negative  if for every $\epsilon >0$ there exists a compact $K \subseteq \M$ such that 
\begin{equation}\label{riccasympt}
\rho^-(x) < \epsilon  \quad{\rm for }\ x \in \M\setminus K.
\end{equation} 

\begin{thm}\label{thm1.6} Let $\M$ be a complete manifold with Ricci curvature asymptotically non-negative. Suppose that $\inf \{ \rho ^+, w_0\}$ satisfies the average condition \eqref{meanR} for some $R > 0$ and some constant $w_0>0$. Then $\M$ has finite volume. If in addition \eqref{Gromov} is satisfied for some $x_{0} \in \M$,  
 then $\M$ is compact.
 \end{thm}

Clearly, if \eqref{Gromov} holds for all $x \in \M$ with $d(x, x_0) \ge R$ then \eqref{riccasympt} holds. Thus,  the theorem shows  that   \eqref{Gromov} for $d(x, x_0) \ge R$ and  the average condition \eqref{meanR} for $\inf \{ \rho ^+, w_0\}$  imply that $\M$ is compact.

In  the previous two  theorems we  allow the Ricci curvature to have a non-trivial  negative part. We also allow the positive part of the  Ricci curvature to vanish  on some subsets of $\M$. The Bonnet-Myers theorem does not allow this since it requires  $\rho (x) \ge \nu$ for some  $\nu > 0$ and all $x \in \M$. The Bonnet-Myers theorem was extended by many authors. The literature on this subject is quite extensive and we mention only few papers which deal with conditions in the sprit of ours. Ambrose \cite{Ambrose} and later Mastrolia, Rimoldi and Veronelli \cite{MRV} assume an integrability condition for the Ricci tensor along geodesics. We also refer to Cheeger, Gromov and Taylor \cite{CGT} where a quadratic lower bound for the Ricci curvature is assumed. Aubry \cite{aubry} proved that if
\begin{equation}\label{Aubry}
 \int_\M \left[ (\rho- (n-1))^- \right]^p\, dx < \infty
\end{equation}
for some $p > \frac{n}{2}$, then $\M$ has finite volume. Under a slightly different condition, he proved among other things, the compactness of the manifold and a diameter estimate. A related condition to \eqref{Aubry}, with the integral taken on balls,  was already considered by Petersen and Sprouse \cite{Petersen-Sprouse} who also proved compactness and diameter estimates for $\M$. More recently, Rose \cite{Rose} proved that $\M$ has finite volume provided the $L^\infty$-condition 
\begin{equation}\label{rose}
 \int_0^T  \| e^{-t\Delta} (\rho- k)^- \|_\infty \, dt \le 1 - e^{-kT/4}
\end{equation}
holds for some positive $T$ and $k$. He also proved diameter estimate under a Kato type condition by  relying on Carron and Rose \cite{Carron-Rose} who obtain diameter estimates for compact manifold using positivity of certain  Schr\"odinger operators with potential involving $\rho$ and some constants.  

Our condition on $\rho$ in Theorem \ref{thm2} is not in the same spirit as the conditions in the previous works. In order to check \eqref{rose} one needs some decay for the $L^\infty$-norm of the heat semigroup associated with a Schr\"odinger operator.  Such $L^\infty$-decay is in general more complicated to check than the $L^2$-decay as we do in the present paper. Next, if we consider the easiest case where $\rho \ge 0$, then our mean condition  does not a priori assume the set
$\{ \rho = 0\}$ to have finite measure whereas 
 \eqref{Aubry} readily implies this set to have finite measure. 

Our proof for Theorem \ref{thm2} is  rather simple and it is based on our results on the (strict) positivity of the spectral bound. Note however that we do not  prove diameter estimates for $\M$. A reasonable diameter estimate which uses our mean condition \eqref{meanR} remains unclear to us.

Most of our results remain valid in the setting of weighted manifolds. That is, we consider on $\M$ a measure $\mu = e^{-V} dx$ instead of the Riemannian measure $dx$. This gives rise to  a self-adjoint Ornstein-Uhlenbeck type operator $\Delta_\mu$ and the Hodge-Laplacian
$\Dtf_\mu = d d_\mu^* + d_\mu^*d$. In this setting,  Bochner's formula reads as $\Dtf_\mu = \tilde{\Delta_\mu} + {\mathcal R}$ with 
${\mathcal R} = Ricci + Hess V$, where $Hess V$ is the Hessian of $V$.  The domination property \eqref{0-dom} holds with  $\rho(x)$ being  the smallest eigenvalue of the new tensor ${\mathcal R}$.   
We can then  repeat all our arguments by replacing $Ricci$ by the tensor ${\mathcal R}$.  We obtain the same  lower bounds for the spectral gap as before in terms of the smallest eigenvalue of $Ricci(x) + Hess V(x)$.  In the  last section of this paper we consider perturbations of the radial measure $\mu_\alpha = e^{-\frac{|x|^\alpha}{\alpha}}\, dx$  in the sense that we do not require for our measures to be radial inside or outside a given ball depending whether $\alpha \in (1,2]$ or $\alpha \in [2, \infty)$.  We prove lower bounds for the corresponding spectral gap which  have the optimal asymptotics with respect to the dimension. Lower and upper bounds for the spectral gap of $\mu_\alpha$ were given by Bobkov \cite{Bobkov} and by Bonnefont, Joulin and Ma \cite{Bonnefont-Joulin-Ma}. 

\medskip
\noindent{\bf Acknowledgements.}  We wish to thank the referee for his/her comments.  Theorem \ref{thm1.6} was suggested by him/her and it  improves our previous result where we considered only manifolds with non-negative Ricci curvature.

\section{Setting and preliminary tools}\label{sec2}

In this section we define the spaces and operators which will be needed throughout this paper.

We consider a complete Riemannian manifold $(\M,g)$ and denote by $dx$ its volume measure. We denote by $\nabla$ its gradient, $d$ the exterior derivative on differential forms and $d^*$ its formal adjoint. 

By $L^2 = L^2(\M)$ we denote the usual $L^2$-space of functions on $\M$ with respect to $dx$. The corresponding $L^2$ space of differential forms of order $k \ge 1$ is $L^2(\Lambda^k)$, endowed with the Riemann measure $dx$.  We define 
\[ L^2_{\overline{d}}(\Lambda^1) = \overline{ \{ df, f \in H^1(\M)\} }^{L^2(\Lambda^1)},\]
the closure in $L^2(\Lambda^1)$ of the space $\{ df, f \in H^1(\M)\}$ where $H^1(\M)$ stands for the standard Sobolev space. 
The scalar products in $L^2$ or $L^2(\Lambda^k)$ are both denoted by $\langle \cdot, \cdot \rangle$. Hence for $f, g \in L^2$ and $\omega, \eta \in L^2(\Lambda^1)$
\[
\langle f, g \rangle = \int_\M f(x)g(x)\, dx \, \, \, {\rm and}\, \, \, \langle \omega, \eta \rangle = \int_{\M} \omega(x).\eta(x) \, dx
\]
and notice that $\omega(x).\eta(x)$ is the scalar product in the cotangent space $T^*_x\M$. It should be pointed out that this latter  scalar product depends on the point $x \in \M$ but we do not make this explicit in order to keep the notation as simple as possible. The corresponding norm in $L^2(\Lambda^1)$ (and of course in $ L^2_{\overline{d}} (\Lambda^1)$) is
$$ \| \omega \|_2  := \left( \int_{\M} | \omega(x) |_x^2 \, dx \right)^{1/2}$$
where $| \cdot |_x$ is the norm in $T^*_x\M$. For simplicity, the norms in the above $L^2$-spaces (on functions or on forms) are all denoted by $\| \cdot  \|_2$. 

The operators which will be used in this paper  are all self-adjoint in appropriate Hilbert spaces. The most convenient way to define them is via their quadratic forms. \\

\noindent\underline{{\it The Laplace-Beltrami operators}}. We define on $L^2$ the symmetric form
\[ \fra(u,v) = \int_{\M} \nabla u.\nabla v\, dx, \  u, v \in H^1(\M).\]
The corresponding operator is the nonnegative Laplacian $\Delta$. \\
Note that for $u \in H^1(\M)$,  $\Delta u = 0$ {\it if and only if } $u$ is constant. Therefore, $0$ is an eigenvalue of $\Delta$ (and hence the corresponding eigenfunctions are the constants)  {\it if and only if } $\M$ has finite volume.  We define the orthogonal space to $\ker(\Delta)$ 
\[ L^2_\perp := \{ u \in L^2: \int_{\M} u\, dx = 0 \}\]
 for $vol(\M) < \infty$  and  $L^2_\perp = L^2$ if $vol(\M) = \infty$. If $\Pi$ denotes the projection of $L^2$ onto $L^2_\perp$ then the part of 
$\Delta$ on $L^2_\perp$ is $\Delta(I-\Pi) = (I-\Pi) \Delta (I-\Pi)$. It is a self-adjoint operator on $L^2_\perp$. In order to treat simultaneously the cases of finite and infinite volume, we set
\[
\Dtp:= \left\{ \begin{array}{c c}
               \Delta & \textrm{ if the volume is infinite}\\
               \Delta (I-\Pi) & \textrm{ if the volume is finite.}
               \end{array}\right.\]
It can be seen readily that
\[ D(\Delta)  = \{ f \in H^1(\M), \, d^* d f \in L^2(\M) \} \ {\rm and} \ \Delta f = d^* d f.
\]
In the case of finite volume it is elementary to prove that the semigroup $e^{-t\Delta}$ leaves invariant $L^2_\perp$ and its restriction to this space is $e^{-t \Dtp}$. \\

\noindent\underline{{\it Schr\"odinger operators}}. Let  $0 \le V \in L^1_{loc}(\M)$. Then  one defines the Schr\"odinger operator $\Delta + V$ on $L^2$ as the operator associated with the form
\[ \frb(u,v) = \int_{\M} \nabla u. \nabla v\, dx + \int_{\M} V u v \, dx \]
with domain 
\[ \{ u \in H^1(\M): \int_{\M} V |u|^2\, dx < \infty \}. \]
In the case where $V = V^+ - V^-$ has a non-trivial negative part $V^-$, then  one usually assumes  that for some $\alpha \in [0, 1)$ and some constant $c_\alpha$
\begin{equation}\label{eq-form-moins}
 \int_{\M} V^- |u|^2\, dx \le \alpha \left[ \int_{\M} \nabla u. \nabla v\, dx + \int_{\M} V^+ u v \, dx \right] + c_\alpha \int_{\M} |u|^2\, dx 
 \end{equation}
for all $u \in H^1(\M)$ such that $ \int_{\M} V^+ |u|^2\, dx < \infty$. The corresponding operator $\Delta + V$ to the form $\frb$ is self-adjoint and bounded from below (by $-c_\alpha$).  \\

\noindent\underline{{\it The Hodge Laplacians}}. There are two Laplacians on differential forms which will be of interest to us. The first one is the usual Hodge Laplacian $\Dtf$ on $L^2(\Lambda^1)$. It is the  non-negative self-adjoint  operator associated with the form
\[
\overset{\rightarrow}{\fra}(\omega, \eta) = \int_{\M} d \omega(x). d \eta(x) \, dx + \int_{\M} d^* \omega(x) d^* \eta(x) \, dx 
\]
with domain the Sobolev space
\[
H^1(\Lambda^1) = \{ \omega \in L^2(\Lambda^1): d\omega \in L^2(\Lambda^2) \ {\rm and} \ d^*\omega \in L^2 \}.
\]
If one defines the Sobolev type space 
\begin{equation}\label{W-sp}
{\mathcal W} := \{ \omega \in H^1(\Lambda^1):  d d^* \omega,  d^* d \omega \in L^2(\Lambda^1) \},
\end{equation}
then  ${\mathcal W} $ is contained in the domain of $\Dtf$ and on this space
the operator $\Dtf$ is given explicitly by the expression $\Dtf = d d^* + d^* d$.

Next, we introduce the operator $\Dtf_{\overline{d}}$ on $L^2_{\overline{d}}(\Lambda^1)$ as the  operator associated to the same form
\[
\overset{\rightarrow}{\fra_1}(\omega, \eta) =  \overset{\rightarrow}{\fra}(\omega, \eta) 
\]
but now with domain 
\[
\D(\overset{\rightarrow}{\fra_1}) = H^1(\Lambda^1)\cap  L^2_{\overline{d}}(\Lambda^1).
\]
It is easy to see that this space, endowed with the norm $\| \omega \|_2 + \| d \omega \|_2 + \| d^* \omega \|_2$,  is complete so that the form $\overset{\rightarrow}{\fra_1}$ is closed. Hence $\Dtf_{\overline{d}} $ is well defined and it is a self-adjoint operator on $L^2_{\overline{d}}(\Lambda^1)$. Moreover, since for  
$\omega \in  L^2_{\overline{d}}(\Lambda^1)$, $\omega = \lim_n df_n$ in the $L^2(\Lambda^1)$-sense, one has 
$$\langle  d \omega, \eta \rangle = \langle  \omega, d^*\eta \rangle =  \lim_n \langle  f_n,  d^* d^*\eta \rangle = 0$$
for all $\eta \in C_c^\infty(\Lambda^2)$ and so
\begin{equation}\label{dw=0}
 d \omega = 0 \, \, \, {\rm for \ all }\, \,  \omega \in  L^2_{\overline{d}}(\Lambda^1).
 \end{equation}
 Thus, the form $\overset{\rightarrow}{\fra_1}$ has the expression  
 \[
\overset{\rightarrow}{\fra_1}(\omega, \eta) =  \int_{\M} d^* \omega(x) d^* \eta(x) \, dx 
\]
 on its domain $\D(\overset{\rightarrow}{\fra_1})$. Since  $L^2_{\overline{d}}(\Lambda^1)$ is a closed subspace of $L^2(\Lambda^1)$ and  $H^1(\Lambda^1) \cap L^2_{\overline{d}}(\Lambda^1)$ is dense in  $L^2_{\overline{d}}(\Lambda^1)$, one has easily
\[
 \D(\Dtf) \cap L^2_{\overline{d}}(\Lambda^1) \subset \D(\Dtf_{\overline{d}})
\]
and for $\omega \in  \D(\Dtf) \cap L^2_{\overline{d}}(\Lambda^1)$, $\Dtf_{\overline{d}} \omega $ coincides with the orthogonal projection (of $L^2(\Lambda^1)$ onto  $L^2_{\overline{d}}(\Lambda^1)$) of $\Dtf \omega$.
In particular, ${\mathcal W}  \cap L^2_{\overline{d}}(\Lambda^1)$ is contained  in the domain of $\Dtf_{\overline{d}}$ and  the operator $\Dtf_{\overline{d}}$ is given by the expression $\Dtf_{\overline{d}} = d d^* $ on this domain. The reason is that $d^* d \omega = 0$ by \eqref{dw=0} and $d^* \omega \in H^1(\M)$ for 
$\omega \in {\mathcal W}$. 
To sum up, we have ${\mathcal W}  \cap L^2_{\overline{d}}(\Lambda^1) \subset {\mathcal W}  $ and 
\begin{equation}\label{eq:egaliteDtf}
\Dtf \omega= (d d^* + d^* d) \omega= d d^* \omega = \Dtf_{\overline{d}} \omega\  {\rm for}\  \omega \in {\mathcal W}  \cap L^2_{\overline{d}}(\Lambda^1).
\end{equation}

It is interesting to point out that the semigroup $e^{-t\Dtf}$ leaves invariant $L^2_{\overline{d}}(\Lambda^1)$ and its restriction to this space is $e^{-t \Dtf_{\overline{d}}}$. 
\begin{prop}\label{prop2.1.0} For every $\omega \in  L^2_{\overline{d}}(\Lambda^1)$,
$$e^{- t \Dtf} \omega = e^{-t \Dtf_{\overline{d}}}\omega.$$
\end{prop}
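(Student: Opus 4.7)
The plan is to apply the Ouhabaz invariance criterion for closed subspaces under a semigroup generated by a closed symmetric form. Here $\Dtf$ is the self-adjoint operator associated with $\overset{\rightarrow}{\fra}$ on $H = L^2(\Lambda^1)$, and $H_0 := L^2_{\overline{d}}(\Lambda^1)$ is a closed subspace with orthogonal projection $P$. The criterion asserts that $e^{-t\Dtf}$ leaves $H_0$ invariant if and only if $P \, H^1(\Lambda^1) \subset H^1(\Lambda^1)$ and $\overset{\rightarrow}{\fra}(P\omega, (I-P)\omega) = 0$ for every $\omega \in H^1(\Lambda^1)$; when it does, the restriction of $\Dtf$ to $H_0$ is the operator associated with $\overset{\rightarrow}{\fra}$ restricted to $H^1(\Lambda^1) \cap H_0 = \D(\overset{\rightarrow}{\fra_1})$, which by definition is $\Dtf_{\overline{d}}$. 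So once the invariance is checked, the identity $e^{-t\Dtf}\omega = e^{-t\Dtf_{\overline{d}}}\omega$ follows.

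First I would identify the orthogonal complement. By definition $\eta \in H_0^\perp$ iff $\langle \eta, df\rangle = 0$ for every $f \in H^1(\M)$; since $\M$ is complete, $C_c^\infty(\M)$ is dense in $H^1(\M)$, so this is equivalent to $d^*\eta = 0$ in the distributional sense. In particular, for any $\omega \in H^1(\Lambda^1)$, the component $(I-P)\omega$ satisfies $d^*((I-P)\omega) = 0$.

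Next I would check that $P$ preserves $H^1(\Lambda^1)$ for $\omega \in H^1(\Lambda^1)$. The fact $d(P\omega) = 0$ is immediate from \eqref{dw=0} because $P\omega \in L^2_{\overline{d}}(\Lambda^1)$. To compute $d^*(P\omega)$, test against $f \in C_c^\infty(\M)$: since $df \in L^2_{\overline{d}}(\Lambda^1)$, the duality $\langle P\omega, df\rangle = \langle \omega, df\rangle = \langle d^*\omega, f\rangle$ forces $d^*(P\omega) = d^*\omega \in L^2$. Hence $P\omega \in H^1(\Lambda^1)$. Plugging this into
\[
\overset{\rightarrow}{\fra}(P\omega, (I-P)\omega) = \langle d(P\omega), d((I-P)\omega)\rangle + \langle d^*(P\omega), d^*((I-P)\omega)\rangle,
\]
both summands vanish, verifying the orthogonality condition.

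The Ouhabaz criterion then yields invariance of $L^2_{\overline{d}}(\Lambda^1)$ under $e^{-t\Dtf}$, and identifies the restriction of $\Dtf$ with the operator associated with $\overset{\rightarrow}{\fra_1}$, i.e. $\Dtf_{\overline{d}}$, so the two semigroups agree on $L^2_{\overline{d}}(\Lambda^1)$. The main obstacle is conceptual rather than technical: recognising that the invariance reduces to a form-domain condition, and correctly identifying $H_0^\perp$ with the weak kernel of $d^*$ — a step where completeness of $\M$ enters through the density of $C_c^\infty(\M)$ in $H^1(\M)$. Everything else is a short duality computation built on \eqref{dw=0}.
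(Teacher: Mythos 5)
Your proof is correct and takes a genuinely different route from the paper's. The paper works with resolvents: it proves $(I + \Dtf)^{-1}df = (I + \Dtf_{\overline{d}})^{-1}df$ for $f\in H^1(\M)$ by invoking the commutation $(I+\Dtf)^{-1}d = d(I+\Delta)^{-1}$ of \eqref{eq.resol}, checking that $\omega_0 = d(I+\Delta)^{-1}f$ lies in ${\mathcal W}\cap L^2_{\overline{d}}(\Lambda^1)$, applying the pointwise identity \eqref{eq:egaliteDtf} to $\omega_0$, and then passing from resolvents to semigroups by the standard iteration. You instead invoke the abstract invariance criterion for closed subspaces under form semigroups (see \cite{Ouhabaz05}) and verify its two hypotheses by short duality computations: $d(P\omega)=0$ from \eqref{dw=0}; $d^*(P\omega)=d^*\omega$ by testing $P\omega$ against $d\phi$ for $\phi\in C_c^\infty(\M)$ and using $Pd\phi=d\phi$; and $d^*((I-P)\omega)=0$ because $L^2_{\overline{d}}(\Lambda^1)^\perp$ coincides with the weak kernel of $d^*$. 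Your route bypasses the commutation machinery of Proposition~\ref{prop2.1} entirely and works purely at the form level, which is cleaner and avoids the forward reference to \eqref{eq.resol} that the paper's proof relies on; the paper's route, on the other hand, produces the resolvent intertwining as a by-product, which is reused in Section~\ref{sec4}. Both arguments ultimately exploit the same two structural facts, namely that $d$ annihilates $L^2_{\overline{d}}(\Lambda^1)$ and that $d^*$ annihilates its orthogonal complement; you make the second of these explicit, whereas the paper only uses the first.
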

\begin{proof} It is enough to prove the equality for the resolvents, i.e.,
\begin{equation}\label{eqR}
 (I + \Dtf)^{-1} \omega = (I + \Dtf_{\overline{d}})^{-1} \omega.
 \end{equation}
Indeed, this would imply by iteration  
$$(I + \frac{t}{n}\Dtf)^{-n} \omega = (I + \frac{t}{n}\Dtf_{\overline{d}})^{-n} \omega$$
and letting $n \to \infty$, the equality holds then for the semigroups. 

Since the resolvents are bounded operators, it suffices to prove \eqref{eqR} for $\omega = df$ with $f \in H^1(\M)$. Now, since (see \eqref{eq.resol} below)
\[
 (I + \Dtf)^{-1} df = d (I + \Delta)^{-1} f,
\]

one has $(I + \Dtf)^{-1} df \in  L^2_{\overline{d}}(\Lambda^1) \cap  \D(\Dtf) \subset  \D(\Dtf_{\overline{d}})$.

 In particular, one can apply $\Dtf_{\overline{d}}$ to $(I + \Dtf)^{-1} df$. On the other hand, for $\omega_0 := (I + \Dtf)^{-1} df = d (I+ \Delta)^{-1} f$, we have $\omega_0 \in {\mathcal W}$  (see \eqref{W-sp}). This follows from the facts that  $d^*d \omega_0 = 0 \in L^2(\Lambda^1)$ and $dd^* \omega_0 = d \Delta (I+ \Delta)^{-1}f = d [ f - (I+ \Delta)^{-1}f] \in L^2(\Lambda^1)$ because $f \in H^1(\M)$. We apply \eqref{eq:egaliteDtf} to $\omega_0$ and obtain
 $$ (I + \Dtf_{\overline{d}}) (I + \Dtf)^{-1} df = (I + \Dtf) (I + \Dtf)^{-1} df = df.$$
 This proves \eqref{eqR}. \end{proof}

The commutation property 
$$ d\Delta = d \Delta_\perp = \Dtf d$$
follows formally from the definitions of the operators. However, some domain questions are hidden here and one has to be careful on which functions both terms do act as operators. 
The commutation with the semigroup does not have this domain task. 
\begin{prop}\label{prop2.1}
For every $t > 0$ and every function $f \in H^1(\M)$,
\beq\label{eq:d-e-tD}
d  e^{-t \Dtp } f = e^{- t \Dtf}\, df = e^{-t \Dtf_{\overline{d}}}\, df.
\eeq
\end{prop}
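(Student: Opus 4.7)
My plan is to mirror the strategy of Proposition~\ref{prop2.1.0}: prove a resolvent intertwining first and then pass to the semigroup via the exponential formula. The second equality $e^{-t\Dtf}df = e^{-t\Dtf_{\overline{d}}}df$ is immediate from Proposition~\ref{prop2.1.0} applied to $\omega = df \in L^2_{\overline{d}}(\Lambda^1)$. Moreover, $e^{-t\Dtp}$ and $e^{-t\Delta}$ differ at most by the projection onto the constants, which lies in $\ker(d)$, so $d\, e^{-t\Dtp}f = d\, e^{-t\Delta}f$ in both the finite- and infinite-volume cases. It therefore suffices to prove
\[
d\,e^{-t\Delta}f = e^{-t\Dtf}\,df,\qquad f \in H^1(\M).
\]

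The central step I would carry out is the resolvent identity
\[
d(I+\Delta)^{-1} f = (I+\Dtf)^{-1} df,\qquad f \in H^1(\M).
\]
Setting $u := (I+\Delta)^{-1}f \in D(\Delta) \subset H^1(\M)$ and $\omega_0 := du \in L^2(\Lambda^1)$, I would first verify that $\omega_0 \in \mathcal{W}$ (see \eqref{W-sp}): indeed $d\omega_0 = 0 \in L^2(\Lambda^2)$ since $d^2=0$, and $d^*\omega_0 = d^*du = \Delta u \in L^2(\M)$ by definition of $u \in D(\Delta)$. Applying the expression $\Dtf = dd^* + d^*d$ valid on $\mathcal{W}$, I then compute
\[
(I+\Dtf)\omega_0 = du + dd^*du + d^*d\,du = du + d\Delta u = d(u+\Delta u) = df,
\]
and inverting $I+\Dtf$ yields the desired identity.

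Iterating this intertwining gives $d\bigl(I+\tfrac{t}{n}\Delta\bigr)^{-n} f = \bigl(I+\tfrac{t}{n}\Dtf\bigr)^{-n} df$ for every $n \ge 1$. By the Hille--Yosida exponential formula the right-hand side converges in $L^2(\Lambda^1)$ to $e^{-t\Dtf}df$ as $n \to \infty$, while $u_n := \bigl(I+\tfrac{t}{n}\Delta\bigr)^{-n}f$ converges to $e^{-t\Delta}f$ in $L^2(\M)$. Since $d$ is a closed operator from $L^2(\M)$ (with domain $H^1(\M)$) into $L^2(\Lambda^1)$, the two convergences $u_n \to e^{-t\Delta}f$ and $du_n \to e^{-t\Dtf}df$ together force $e^{-t\Delta}f \in H^1(\M)$ and $d\,e^{-t\Delta}f = e^{-t\Dtf}df$, completing the proof. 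The only delicate point is the verification $\omega_0 \in \mathcal{W}$, which boils down to the identity $d^*du = \Delta u$ for $u \in D(\Delta)$; this follows directly from the definition of $\Delta$ through the form $\fra$, but it is what allows us to apply the explicit expression of $\Dtf$ to $\omega_0$ in the key computation above.
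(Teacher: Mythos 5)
Your proof is correct and follows essentially the same route as the paper's: establish the resolvent intertwining $d(I+\Delta)^{-1}f = (I+\Dtf)^{-1}df$ by applying the explicit expression $\Dtf = dd^* + d^*d$ on $\mathcal{W}$ to $\omega_0 = du$, iterate, and pass to the limit using closedness of $d$. Note only that placing $\omega_0$ in $\mathcal{W}$ also requires $dd^*\omega_0 = d\Delta u \in L^2(\Lambda^1)$, which holds because $\Delta u = f - u \in H^1(\M)$ — a fact you use implicitly in the computation of $(I+\Dtf)\omega_0$ but do not state when verifying the $\mathcal{W}$ membership.
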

\begin{proof}
The second equality is already stated in the previous proposition. We prove the first one. 

The operators $I + \Delta$ or $I + \Dtp$  are inversible. In addition, $ (I+ \Dtp)^{-1}$ is bounded on $L^2_\perp$ and has values in $H^1(\M)$ (or even in $\D(\Delta_\perp)$).
Now  for $f\in \D(\Dtp)$ with  $\Delta f  = \Dtp f \in H^1(\M)$ one has $d f \in {\mathcal W}$ and 
\[
d (I+ \Dtp) f= (I+  \Dtf) df.
\]
Let  $f=(I+\Dtp)^{-1} g$ for $g\in H^1(\M)$ ($g\perp 1$ if the measure is finite).
Then   $(I+\Dtp)^{-1} g \in \D(\Dtp) $ and 
\[
\Dtp (I+\Dtp)^{-1} g = g - (I+\Dtp)^{-1} g \in H^1(\M).
\]
Thus we can apply the above commutation property with such $f$ and we obtain
\[
dg= (I+  \Dtf)d (I+\Dtp)^{-1} g.
\]
Applying the bounded operator $(I+\Dtf)^{-1}$ to both sides it follows that 
\beq\label{eq.resol}
(I+\Dtf)^{-1} dg=d (I+\Dtp)^{-1} g.
\eeq
The same relation holds for $(I+ \alpha \Dtp)$ for any $\alpha>0$.
Iterating this relation one sees also that it holds for $(I+ \alpha \Dtp)^{-k}$.
In particular, for $g\in H^1(\M)$,
 \[
\left(I+\frac{t}{n}\Dtf\right)^{-n} dg=d \left(I+\frac{t}{n}\Dtp\right)^{-n} g.
\]
The left side converges in $L^2(\Lambda^1)$ towards $e^{-t\Dtf}dg$. On the other hand, $\left(I+\frac{t}{n}\Dtp\right)^{-n} g$ converges 
in $L^2$ to $e^{-t\Dtp}g$ and $d \left(I+\frac{t}{n}\Dtp\right)^{-n} g$ is a Cauchy sequence. This implies that $\left(I+\frac{t}{n}\Dtp\right)^{-n} g$ is a Cauchy sequence in $H^1(\M)$. It follows  that $d \left(I+\frac{t}{n}\Dtp\right)^{-n} g$ converges to $d e^{-t\Dtp}g$. This proves that 
 for all $g\in H^1(\M)$,
\[d  e^{-t \Dtp } g = e^{- t \Dtf} dg.
\]
\end{proof}

Let $\rho(x)$ be the smallest eigenvalue of the tensor $Ricci(x)$ at the point $x \in \M$ and suppose that  its negative part $\rho ^-$ is form bounded in the sense of \eqref{eq-form-moins} for some $\alpha \in [0, 1)$. Thus, we can define the Schr\"odinger operator $\Delta + \rho$. 
One of the remarkable facts on the semigroup $e^{-t\Dtf}$ is the very well known pointwise domination
\begin{equation}\label{Hodge-domination}
| e^{-t\Dtf} \omega (x) |_x \le e^{-t(\Delta + \rho)} |\omega(x)|_x
\end{equation}
for all $t > 0$ and and $\omega \in L^2(\Lambda^1)$. Here, as we mentioned above, $ | \cdot |_x$ denotes the norm in the cotangent space 
$T_x^*M$. This domination holds also for the semigroup $e^{-t\Dtf_{\overline{d}}}$ due to Proposition \ref{prop2.1.0}. The classical way to prove \eqref{Hodge-domination} is to write by the Bochner's formula $\Dtf = \tilde{\Delta} + Ricci$ where  
$\tilde{\Delta} = \nabla^* \nabla $ is the rough Laplacian (here $\nabla$ is the Levi-Civita connexion). By Kato's inequality, one proves (see \cite{HSU})
\begin{equation}\label{Hodge-domination-1}
| e^{-t\tilde{\Delta} }\omega (x) |_x \le e^{-t\Delta} |\omega(x)|_x.
\end{equation}
From this and characterization of domination of semigroups \cite{Ouhabaz99} or Theorem 2.30 in \cite{Ouhabaz05},  the domain of the quadratic form of 
$\tilde{\Delta}$ is an ideal of $H^1(\M)$ and one has (Kato's inequality)
\begin{equation}\label{Kato-1}
\int_\M \nabla \omega. \nabla \eta\, dx \ge \int_\M \nabla |\omega |. \nabla | \eta|\, dx
\end{equation}
for differential forms $\omega$ and $\eta$ such that $\omega(x). \eta(x) = |\omega(x)|_x |\eta(x)|_x$. Note that the term on the left hand side is the scalar product in the cotangent space $T_x^*\M$ and thus this equality means that $\omega(x) = \lambda(x) \eta(x)$ with $\lambda(x) > 0$. Let now $\mathcal R$ be a symmetric tensor and $V(x)$ the smallest eigenvalue of ${\mathcal R}(x)$ such that $V^+ \in L^1_{loc}(\M)$ and $V^-$ is form bounded in the sense of  \eqref{eq-form-moins}.  For $\omega $ and $\eta$ as above one has
\[
{\mathcal R}(x) \omega(x). \eta(x) = \lambda(x) {\mathcal R}(x) \eta(x). \eta(x) \ge \lambda(x) V(x)  |\eta(x)|_x^2 = V(x) |\omega(x)|_x | \eta(x)|_x.
\]
This and \eqref{Kato-1} imply that 
\begin{eqnarray*}
&& \hspace{-2cm} \int_\M \nabla \omega. \nabla \eta\, dx  + \int_M  {\mathcal R}(x) \omega(x).\eta(x)\, dx\\
&& \hspace{2cm} \ge \int_\M \nabla |\omega |. \nabla | \eta|\, dx + \int_M  V(x) |\omega(x)|_x |\eta(x)|_x\, dx
\end{eqnarray*}
and one checks the ideal property.  The details are easy and one concludes from the domination criteria mentioned above that the semigroup associated with $\tilde{\Delta} + {\mathcal R}$ is dominated by the semigroup associated with $\Delta+ V$. This applies in particular to ${\mathcal R} = Ricci$ and $V(x) = \rho(x)$ and gives  \eqref{Hodge-domination} under the sole assumption that $\rho^-$ satisfies \eqref{eq-form-moins}. 

\section{Intertwining  via the Riesz transform and applications}\label{sec4}

A fondamental tool in our analysis  will be the Riesz transform $R = d \Dtp^{-\frac{1}{2}}$. Recall  that $\Dtp = \Delta$ if $\M$ has infinite volume (in which case $0$ is not an eigenvalue of $\Delta$) and $\Dtp = \Delta(I- \Pi)$ if $\M$ has finite volume, $\Pi$ is then the projection on the constants.  Recall also that $L^2_{\overline{d}}(\Lambda^1)$ denotes the closure in $L^2(\Lambda^1)$ of the space of exact forms. 

Since the Laplacian is not necessarily invertible on $L^2(\M)$ one starts by  defining  the Riesz transform on the range 
$Im(\Dtp^\frac{1}{2})$ of $\Dtp^\frac{1}{2}$, 
\[
R : \begin{array}{c c c }  
     Im(\Dtp^\frac{1}{2}) &\to &  L^2_{\overline{d}}(\Lambda^1) \\
     f &\mapsto& d\Dtp^{-\frac{1}{2}}f.
     \end{array}
\]
We state some elementary properties.
\begin{lem}\label{lem321}  The Riesz transform $R$ extends to an operator (still denoted by $R$)
 \[
R :    \overline{Im(\Dtp^\frac{1}{2})} \to   L^2_{\overline{d}}(\Lambda^1)
 \]
which is an isometry and surjective. 
\end{lem}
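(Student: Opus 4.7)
The plan is to verify isometry on the original domain, then extend by continuity, and finally argue surjectivity through density of exact forms in $L^2_{\overline{d}}(\Lambda^1)$.

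First I would establish the isometry on $Im(\Dtp^{\frac{1}{2}})$ directly. For $f = \Dtp^{\frac{1}{2}} g \in Im(\Dtp^{\frac{1}{2}})$ with $g \in \D(\Dtp^{\frac{1}{2}})$, I would use the fact that $\D(\Dtp^{\frac{1}{2}}) = H^1(\M)$ (or $H^1(\M) \cap L^2_\perp$ in the finite volume case), which follows from the form representation of $\Dtp$ via $\fra$. The key identity is
\[
\|R f\|_2^2 = \|d g\|_2^2 = \fra(g,g) = \|\Dtp^{\frac{1}{2}} g\|_2^2 = \|f\|_2^2,
\]
where I used that $\int_\M |\nabla g|^2\,dx = \int_\M |dg|^2\,dx$. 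This shows $R$ is an isometry on $Im(\Dtp^{\frac{1}{2}})$ with values in $L^2(\Lambda^1)$. Moreover, $R f = dg$ is by definition an exact form arising from $g \in H^1(\M)$, so $Rf \in L^2_{\overline{d}}(\Lambda^1)$.

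Since $R$ is an isometry, it extends uniquely by continuity to a (linear) isometry on the closure $\overline{Im(\Dtp^{\frac{1}{2}})}$, and its range lies in the closed subspace $L^2_{\overline{d}}(\Lambda^1)$.

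For surjectivity I would argue that the range already contains every $df$ with $f \in H^1(\M)$: given such $f$, write $\tilde f = f - \Pi f \in H^1(\M)\cap L^2_\perp = \D(\Dtp^{\frac{1}{2}})$ (with $\Pi=0$ in the infinite volume case). Since $\Pi f$ is constant (or zero), $d f = d\tilde f$, and setting $h := \Dtp^{\frac{1}{2}} \tilde f \in Im(\Dtp^{\frac{1}{2}})$ gives $R h = d\tilde f = d f$. Hence the range of the extension contains $\{df : f \in H^1(\M)\}$, which is dense in $L^2_{\overline{d}}(\Lambda^1)$ by definition. Because the range of an isometry is closed, it must then coincide with all of $L^2_{\overline{d}}(\Lambda^1)$, giving surjectivity.

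The only subtlety I anticipate is bookkeeping between the finite and infinite volume cases and making sure the domain identification $\D(\Dtp^{\frac{1}{2}}) = H^1(\M) \cap L^2_\perp$ is used cleanly when inverting $\Dtp^{\frac{1}{2}}$ on its range; everything else is a routine application of the identity $\|du\|_2^2 = \fra(u,u)$ and the closed-range property of isometries.
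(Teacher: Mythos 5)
Your proof is correct and follows essentially the same route as the paper's: both establish the isometry from the identity $\|\Dtp^{1/2}g\|_2^2 = \fra(g,g) = \|dg\|_2^2$ (the paper phrases it via the inner product $\langle d\Dtp^{-1/2}f, d\Dtp^{-1/2}f\rangle = \langle f, f\rangle$), and both get surjectivity by showing the range contains the dense set $\{df : f \in H^1(\M)\}$ after normalizing $g \perp 1$ in the finite-volume case. The only cosmetic difference is that you invoke the closed-range property of isometries on a complete space, whereas the paper unwinds that same fact into an explicit Cauchy-sequence argument for $\Dtp^{1/2}g_n$.
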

\begin{proof}
First 
\beq
Im(\Dtp^\frac{1}{2}):=\left\{
f= \Delta^\frac{1}{2} g,  g \in H^1(\M) \textrm{ with } g\perp1 \textrm{ if the measure is finite}
\right\}.
\eeq 
Thus, if $f \in Im(\Dtp^\frac{1}{2})$, $f= \Dtp^\frac{1}{2} g$, then
\[
Rf= d g \in L^2_{\overline{d}}(\Lambda^1).
\]
Moreover, since $g \in H^1(\M)$, 
\begin{eqnarray}
\| Rf \|_2^2 &=& \langle  d\Dtp^{-\frac{1}{2}} f, d\Dtp^{-\frac{1}{2}} f \rangle \nonumber\\
                                         &=& \langle  \Dtp^{\frac{1}{2}}\Dtp^{-\frac{1}{2}} f,\Dtp^{\frac{1}{2}}\Dtp^{-\frac{1}{2}} f \rangle \nonumber\\
                                         &=&  \| f \|_2^2. \label{R=2}
\end{eqnarray}

Now we turn to the surjectivity in $L^2_{\overline{d}}(\Lambda^1)$.
Let $\o\in L^2_{\overline{d}}(\Lambda^1)$ be such that  $\o=dg $ for some $g\in H^1(\M)$. If the measure is finite, we may add a constant to $g$ so that  $g\perp1$. Thus, $g\in \D(\Dtp^{\frac{1}{2}})$ and 
$\Dtp^{\frac{1}{2}} g \in Im( \Dtp^{\frac{1}{2}})$.  Hence, $R (\Dtp^{\frac{1}{2}} g) = dg = \o$. 
For general $\o\in L^2_{\overline{d}}(\Lambda^1)$, there exists a sequence  $(g_n) \in H^1(\M)$ such that
$(d g_n)$ converges to $\o$ in $L^2(\Lambda^1)$.  As before, by adding a constant to $g_n$ we can assume $g_n \in \D(\Dtp^{\frac{1}{2}})$. So $d g_n = d \Dtp^{-\frac{1}{2}} \Dtp^{\frac{1}{2}} g_n = R (\Dtp^{\frac{1}{2}} g_n)$. It follows from the equality \eqref{R=2} that 
$\Dtp^{\frac{1}{2}} g_n$ is a Cauchy sequence and hence it converges for the $L^2$-norm to some $h \in \overline{Im(\Dtp^\frac{1}{2})}$. Thus, $ dg_n = R (\Dtp^{\frac{1}{2}} g_n)$ converges to $\o$ and to $R(h)$. This gives $\o = R(h)$ and proves the surjectivity. 
\end{proof}

Since $0$ is not an eigenvalue of $\Delta$ when $\M$ has infinite volume and it is not an eigenvalue of $\Delta(I-\Pi)$ when $\M$ has finite volume it follows by duality that 
$\overline{Im(\Dtp^\frac{1}{2})}  = L^2$ if $\M$ has infinite volume and $\overline{Im(\Dtp^\frac{1}{2})} = L^2_\perp$ if $\M$ has finite volume. 
To make the notation easier, we sometimes write  $L^2_\perp = L^2$ even when $\M$ has infinite volume to avoid discussing separately the cases of finite and infinite volume. 

We mention the following simple lemma. 
\begin{lem}\label{lem331}
 The adjoint operator $R^*$, 
 \[
R^* :   L^2_{\overline{d}}(\Lambda^1)   \to  L^2_\perp
 \] is defined  for $\o \in L^2_{\bar d}$ by
 \[
 \langle R^* \o, f \rangle_{L^2(\M)} =  \langle \o, R f \rangle_{L^2(\Lambda_1)} \textrm{ for all } f \in  L^2_\perp.
 \]
It satisfies
  \[
  R^* R = I \ \ {\rm on }\ L^2_\perp
  \]
  and since $R$ is surjective,
\[
  RR^* = I \ \ {\rm on }\ L^2_{\overline{d}}(\Lambda^1).
  \]
\end{lem}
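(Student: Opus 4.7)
The plan is to derive both identities directly from the previous lemma, which states that $R \colon \overline{Im(\Dtp^{1/2})} \to L^2_{\overline{d}}(\Lambda^1)$ is a surjective isometry between Hilbert spaces. The definition of $R^*$ by duality is automatic: since $R$ is a bounded linear operator between Hilbert spaces, its adjoint $R^*$ exists as a bounded operator in the reverse direction, and the displayed identity simply records the defining relation $\langle R^*\omega, f\rangle = \langle \omega, Rf\rangle$.

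For the identity $R^*R = I$ on $\overline{Im(\Dtp^{1/2})}$, the key input is the isometry property \eqref{R=2}. For every $f \in \overline{Im(\Dtp^{1/2})}$ one has
\[
\langle R^*R f, f\rangle = \langle Rf, Rf\rangle_{L^2(\Lambda^1)} = \|Rf\|_2^2 = \|f\|_2^2 = \langle f, f\rangle.
\]
Since $R^*R - I$ is self-adjoint on $\overline{Im(\Dtp^{1/2})}$ and its associated quadratic form vanishes identically, polarization yields $R^*R = I$ on that space.

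For the identity $RR^* = I$ on $L^2_{\overline{d}}(\Lambda^1)$, one uses surjectivity. Given $\omega \in L^2_{\overline{d}}(\Lambda^1)$, the previous lemma provides some $h \in \overline{Im(\Dtp^{1/2})}$ with $R h = \omega$; the isometry property ensures this $h$ is unique. Applying $R^*$ and using the first identity just proved gives $R^*\omega = R^*R h = h$, and hence $RR^*\omega = R h = \omega$.

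There is no real obstacle here; everything reduces to the Hilbert-space fact that a surjective isometry between Hilbert spaces is unitary. The only point requiring care is keeping track of the correct domain, namely $\overline{Im(\Dtp^{1/2})}$ (which equals $L^2$ in the infinite volume case and $L^2_\perp$ in the finite volume case), to ensure that $R^*R = I$ is stated on the space where $R$ actually is an isometry.
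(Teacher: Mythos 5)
Your proof is correct, and it is the standard Hilbert-space argument (surjective isometry implies unitary) that the paper clearly has in mind: the paper in fact states this lemma without proof, introducing it as a "simple lemma." Your use of polarization for $R^*R=I$ and of surjectivity plus $R^*R=I$ for $RR^*=I$ is exactly the natural route, and you correctly keep the domain $\overline{Im(\Dtp^{1/2})}$ in view.
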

Seen as an operator on $L^2(\Lambda^1)$, the operator $RR^*$ is the projection onto $L^2_{\overline{d}}(\Lambda^1)$. The operator $I- RR^*$ is called the Leray-Helmoltz projection. It plays an important role in the theory of Navier-Stokes equations. \\

Formally, it is clear that 
\[
R\, \Dtp R^* = \Dtf_{\overline{d}}.
\]
Since we wish  to avoid domain questions here we first  state  and prove this equality at the level of semigroups. 

\begin{prop}\label{inter1}
We have for all $t > 0$ and $\o \in L^2_{\overline{d}}(\Lambda^1)$,
\[
R\, e^{-t\Dtp} R^*\o  = e^{-t  \Dtf_{\overline{d}}}\, \o.
\]
\end{prop}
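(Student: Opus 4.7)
The plan is to pass from the formal identity $R\Dtp R^* = \Dtf_{\overline{d}}$ to its exponentiated version by working on the dense subspace $Im(\Dtp^{1/2})$ and invoking Proposition \ref{prop2.1} which already gives the intertwining on functions.

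First I would use the preceding lemma to write any $\omega \in L^2_{\overline{d}}(\Lambda^1)$ as $\omega = R f$ with $f \in \overline{Im(\Dtp^{1/2})}$, and conclude $R^*\omega = R^* R f = f$. Thus the claim is equivalent to showing
\[
R\, e^{-t\Dtp} f = e^{-t \Dtf_{\overline{d}}}\, R f \quad \text{for every } f \in \overline{Im(\Dtp^{1/2})}.
\]
Both sides are bounded in $f$ (since $R$, $R^*$ are bounded — in fact isometries where relevant — and the semigroups are contractions), so by density it suffices to prove this equality on the dense subspace $Im(\Dtp^{1/2})$.

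Take therefore $f = \Dtp^{1/2} g$ with $g \in H^1(\M)$ (and $g \perp 1$ in the finite volume case), so that $Rf = dg$. By the functional calculus for the nonnegative self-adjoint operator $\Dtp$, the semigroup $e^{-t\Dtp}$ leaves $\D(\Dtp^{1/2})$ invariant and commutes with $\Dtp^{1/2}$ there, hence
\[
e^{-t\Dtp} f = \Dtp^{1/2}\, e^{-t\Dtp} g \in Im(\Dtp^{1/2}),
\]
and consequently
\[
R\, e^{-t\Dtp} f = d\, \Dtp^{-1/2}\, \Dtp^{1/2}\, e^{-t\Dtp} g = d\, e^{-t\Dtp} g.
\]
Now Proposition \ref{prop2.1} (applied to $e^{-t\Dtp} g \in H^1(\M)$, or rather the statement $d e^{-t\Dtp} g = e^{-t\Dtf_{\overline{d}}} dg$ already proved there for $g \in H^1(\M)$) yields $d\, e^{-t\Dtp} g = e^{-t\Dtf_{\overline{d}}}\, dg = e^{-t\Dtf_{\overline{d}}}\, R f$, which is exactly what we wanted.

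I do not anticipate a serious obstacle; the only thing to be careful about is the functional-calculus step $e^{-t\Dtp}\Dtp^{1/2} g = \Dtp^{1/2} e^{-t\Dtp} g$, which is standard for self-adjoint operators (both sides agree via the spectral theorem on $\D(\Dtp^{1/2})$), and the justification that $R$ is bounded (proved in the preceding lemma) so that the density argument transfers the identity from $Im(\Dtp^{1/2})$ to its closure.
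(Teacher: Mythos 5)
Your proposal is correct and takes essentially the same route as the paper: both reduce the identity to the dense set of $\omega$ coming from $Im(\Dtp^{1/2})$, commute $\Dtp^{1/2}$ with the semigroup by functional calculus, and then invoke Proposition~\ref{prop2.1} to pass from $d\,e^{-t\Dtp}g$ to $e^{-t\Dtf_{\overline{d}}}dg$. The only cosmetic difference is in the initial reduction — you write $\omega = Rf$ and use $R^*R = I$ to get $R^*\omega = f$ directly, whereas the paper starts with $\omega = df$ for $f \in C_c^\infty(\M)$ and computes $R^*\omega = \Dtp^{-1/2}d^*(df) = \Dtp^{1/2}f$ explicitly; these are equivalent once you unwind the definitions.
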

\begin{proof} We start with $\o = df$ and $f \in C_c^\infty(\M)$. If $\M$ has finite volume, we add a constant $c$ to $f$ so that $\int_{\M} f = 0$. In particular, $f \in D(\Dtp)$ and $\o = df= d(f+c)$. Therefore,
\begin{eqnarray*}
R\, e^{-t\Dtp} R^*\o &=& R\, e^{-t\Dtp} \Dtp^{-\frac{1}{2}} d^* (df)\\
&=& R\, e^{-t\Dtp} \Dtp^{-\frac{1}{2}} \Delta f\\
&=& R\, e^{-t\Dtp} \Dtp^{\frac{1}{2}}  f\\
&=& R\, \Dtp^{\frac{1}{2}} e^{-t\Dtp} f.
\end{eqnarray*}
Since $\Dtp^{\frac{1}{2}} e^{-t\Dtp} f \in Im(\Dtp^{\frac{1}{2}})$, we can write $R\, \Dtp^{\frac{1}{2}} e^{-t\Dtp} f = d \Dtp^{-\frac{1}{2}} \Dtp^{\frac{1}{2}} e^{-t\Dtp} f$. It follows from this and Proposition \ref{prop2.1} that 
\[
R\, e^{-t\Dtp} R^*\o = d e^{-t\Dtp} f = e^{-t  \Dtf_{\overline{d}}}\, \o.
\]
We extend this equality by a density argument to all $\o \in L^2_{\overline{d}}(\Lambda^1)$.
\end{proof}

It follows from the  proposition  that 
$$ D( \Dtf_{\overline{d}})  = \{ \omega \in L^2_{\overline{d}}(\Lambda^1), R^* \omega \in D(\Dtp) \}, \quad R\, \Dtp R^* \omega = \Dtf_{\overline{d}} \omega.$$
We can also write $D(\Dtp)$ in terms of  $D( \Dtf_{\overline{d}})$ by changing  $R^*$ into $R$. 

This intertwining property  imply  equality for the spectra  and essential spectra of the two involved operators. 

\begin{cor}\label{sigma=}
We have
\[ \sigma (\Dtp) = \sigma (\Dtf_{\overline{d}}),  \quad \sigma_{ess} (\Dtp) = \sigma_{ess} (\Dtf_{\overline{d}})
\]
\end{cor}
\begin{proof}
The equality of the spectra follows immediately from the intertwining property of the operators. For the essential spectra, it is enough to notice that every Weyl  sequence for one operator is transformed by $R^*$ or $R$ into a Weyl sequence for the other operator. This uses the fact that $R^*$ and $R$ are isometries between appropriate spaces (cf. Lemmas \ref{lem321} and  \ref{lem331}). 
 \end{proof}

The first part of Proposition \ref{prop1} can be obtained from the previous corollary. We restate the result in the next proposition. Recall that $\rho(x)$ is the smallest eigenvalue of the Ricci tensor at $x$. 
\begin{prop}\label{prop3.1.1}
Let $\M$ be a complete Riemannian manifold. Suppose  that  $\rho ^-$ is form bounded in the sense of \eqref{formbdd} for some $\alpha \in [0, 1)$. 
\begin{itemize}
\item If $vol({\mathcal M}) = \infty$, then $s(\Delta) \ge s(\Delta + \rho) \ge (1-\alpha) s(\Delta+ \rho^+)$. In particular, if $\rho \ge 0$ then $s(\Delta) = s(\Delta + \rho )$.
\item If  $vol({\mathcal M}) < \infty$, then $\lambda_1(\Delta) \ge s(\Delta + \rho) \ge (1-\alpha)s(\Delta+\rho^+)$. 
\end{itemize}
\end{prop}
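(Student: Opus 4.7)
The plan is to chain together the three ingredients already assembled in Sections~\ref{sec2} and \ref{sec4}: the intertwining identity of Corollary~\ref{sigma=}, the fact that $\Dtf_{\overline{d}}$ sits inside $\Dtf$, and the pointwise Hodge domination \eqref{Hodge-domination}. The form‐boundedness hypothesis will then convert estimates on $\Delta+\rho^+$ into estimates on $\Delta+\rho$ via min-max.

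\textbf{Step 1: Reduce $s(\Dtp)$ to $s(\Delta+\rho)$.} First, Corollary~\ref{sigma=} gives $\sigma(\Dtp)=\sigma(\Dtf_{\overline{d}})$, so in particular $s(\Dtp)=s(\Dtf_{\overline{d}})$. Next, because $L^2_{\overline{d}}(\Lambda^1)$ is a closed subspace on which the form $\overset{\rightarrow}{\fra_1}$ coincides with the restriction of $\overset{\rightarrow}{\fra}$ (the defining form of $\Dtf$), the min-max principle yields $s(\Dtf_{\overline{d}}) \ge s(\Dtf)$. Finally, the pointwise bound \eqref{Hodge-domination}, $|e^{-t\Dtf}\omega|_x \le e^{-t(\Delta+\rho)}|\omega|_x$, integrated and taken in $L^2$-operator norm, gives
\[
\|e^{-t\Dtf}\|_{L^2(\Lambda^1)\to L^2(\Lambda^1)} \le \|e^{-t(\Delta+\rho)}\|_{L^2\to L^2},
\]
and passing to the growth bound $-\tfrac{1}{t}\log\|\cdot\|$ as $t\to\infty$ produces $s(\Dtf)\ge s(\Delta+\rho)$. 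Concatenating, $s(\Dtp)\ge s(\Delta+\rho)$.

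\textbf{Step 2: Identify $s(\Dtp)$ with the desired spectral quantity.} When $vol(\M)=\infty$, $0$ is not an eigenvalue of $\Delta$, so $\Dtp=\Delta$ and $s(\Dtp)=s(\Delta)$. When $vol(\M)<\infty$, $\Dtp$ is the part of $\Delta$ on $L^2_\perp=\ker(\Delta)^\perp$, whose spectrum is $\sigma(\Delta)\setminus\{0\}$; therefore $s(\Dtp)=\lambda_1(\Delta)$. This produces the desired inequalities $s(\Delta)\ge s(\Delta+\rho)$ and $\lambda_1(\Delta)\ge s(\Delta+\rho)$, respectively.

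\textbf{Step 3: From $\Delta+\rho$ to $(1-\alpha)(\Delta+\rho^+)$.} For $u\in H^1(\M)$ with $\int_\M \rho^+|u|^2<\infty$, the form-boundedness assumption \eqref{formbdd} yields
\[
\langle (\Delta+\rho)u,u\rangle
= \int_\M |\nabla u|^2 + \int_\M \rho^+|u|^2 - \int_\M \rho^-|u|^2
\ge (1-\alpha)\Bigl[\int_\M|\nabla u|^2 + \int_\M \rho^+|u|^2\Bigr]
= (1-\alpha)\langle (\Delta+\rho^+)u,u\rangle.
\]
By min-max this gives $s(\Delta+\rho)\ge (1-\alpha) s(\Delta+\rho^+)$, completing both displayed chains.

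\textbf{Step 4: The case $\rho\ge 0$, $vol(\M)=\infty$.} Here the opposite direction $s(\Delta+\rho)\ge s(\Delta)$ is immediate from min-max applied to $\langle (\Delta+\rho)u,u\rangle\ge \langle \Delta u,u\rangle$ on the common form domain. Combined with Step~2 this gives $s(\Delta)=s(\Delta+\rho)$.

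The only delicate point is Step~1: one must be sure that the pointwise semigroup domination on cotangent vectors actually transfers to an $L^2$-operator-norm inequality and hence to the spectral bound. This follows routinely because integrating $|e^{-t\Dtf}\omega|_x^2\le (e^{-t(\Delta+\rho)}|\omega|)^2(x)$ and using the contractivity-type bound for $e^{-t(\Delta+\rho)}$ on $L^2$ yields $\|e^{-t\Dtf}\omega\|_2\le e^{-ts(\Delta+\rho)}\|\omega\|_2$ up to a polynomial factor absorbed in the large-$t$ asymptotics; but it is worth flagging because it is the only nontrivial transition in an otherwise bookkeeping proof.
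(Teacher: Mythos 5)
Your proposal follows exactly the paper's route: Corollary~\ref{sigma=} gives $s(\Dtp)=s(\Dtf_{\overline{d}})$, min-max (equivalently the inf characterization on the smaller form domain) gives $s(\Dtf_{\overline{d}})\ge s(\Dtf)$, and the Hodge domination \eqref{Hodge-domination} gives $s(\Dtf)\ge s(\Delta+\rho)$; the form-bounded comparison $s(\Delta+\rho)\ge(1-\alpha)s(\Delta+\rho^+)$ and the case $\rho\ge 0$ are exactly the ``remaining parts'' the paper declares routine. One small remark on your Step~1: for a self-adjoint semigroup bounded below one has $\|e^{-tA}\|=e^{-ts(A)}$ exactly, so the worry about a polynomial factor is unnecessary and the passage from pointwise domination to spectral bounds is immediate.
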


\begin{proof}
  Applying Corollary \ref{sigma=} we obtain  the equality of the spectral bounds, that is
\[ 
s(\Dtp) = s(\Dtf_{\overline{d}}).
\]
Remember that $s(\Dtp)$ is either the spectral bound $s(\Delta)$  (if $vol(\M) =\infty$) or the spectral gap $\lambda_1(\Delta)$ (if $vol(\M) < \infty$). Concerning the right hand side term of the previous equality, we have from \eqref{dw=0}, 
\begin{eqnarray*}
s(\Dtf_{\overline{d}})&=& \inf\left\{ \int_\M | d^* \o |^2\, dx, \, \o \in  L^2_{\overline{d}}(\Lambda^1),  d^*\omega \in L^2, \|\o\|_2 = 1 \right\}\\
&=& \inf\left\{ \int_\M | d^* \o |^2\, dx + \int_\M | d \o |^2\, dx, \, \o \in  L^2_{\overline{d}}(\Lambda^1),  d^*\omega \in L^2, \|\o\|_2 = 1 \right\}\\
&\ge& \inf\left\{ \int_\M | d^* \o |^2\, dx + \int_\M | d \o |^2\, dx, \, \o \in H^1(\Lambda^1),  \|\o\|_2 = 1 \right\}\\
&=& s(\Dtf).
\end{eqnarray*}
On the other hand, it follows from the domination \eqref{Hodge-domination} that
\[ 
s(\Dtf) \ge s(\Delta + \rho).
\]
It follows that
\begin{equation}\label{egalite0}
s(\Dtp) \ge s(\Delta + \rho).
\end{equation}
The remaining parts in the proposition follow easily from this inequality. 
\end{proof}

\section{The spectral bound of a Schr\"odinger operator}\label{sec3}

We consider now  a Schr\"odinger operator $\Delta + V$ with a non-negative and locally bounded potential $V$. In many situations, the spectral bound of $\Delta$ satisfies  $s(\Delta) = 0$  and one may hope for $s(\Delta + V) > 0$ for appropriate $V$. This later property is of interest since it gives an exponential decay of the corresponding semigroup. We shall see that it also gives a lower bound for the spectral gap of $\Delta$.

In the Euclidean setting $\M = \R^n$, it is clear that if $V$ has compact support then $s(\Delta + V) = 0$. This suggests that in order to have $s(\Delta + V) > 0$ the potential should not vanish on some "large" subsets. It was proved in \cite{AB} that for bounded $V$, a necessary and suffisant condition for $s(\Delta + V) > 0$  is that for some radius $R > 0$, the integral of $V$ on balls with radius $R$ is strictly positive. This was extended to general manifolds in \cite{Ouhabaz01} where the condition becomes that the average on balls  is strictly positive, that is the condition \eqref{meanR} for some $R > 0$. Shortly after \cite{Ouhabaz01}, it was proved in \cite{Shen} that one might improve slightly the assumption that $V$ is bounded. In this section we reproduce and improve some arguments from \cite{Ouhabaz01} and \cite{Shen} in order to have suitable lower bounds for $s(\Delta + V)$. 

Suppose that  $\M$ satisfies the covering property $(A1)$. Thus,  $\M = \cup_i B(p_i,R)$ for some sequence $(p_i)_i \in \M$ and each $x \in \M$ is contained in at most $N_R$ of these balls. In particular, for every non-negative function $v$ on $\M$,
\beq\label{eq:covering}
   \sum_i  \int_{B(p_i,R)} v\, dx  \geq \int_\M v\, dx \geq  \frac{1}{N_R}  \sum_i  \int_{B(p_i,R)} v\, dx.
\eeq
Hence,  for $f \in H^1(\M)$,
\begin{eqnarray*}
  \langle (\Delta + V ) f, f\rangle  &=& \int_\M |\nabla f|^2 dx  + \int_\M  V f^2 dx  \\
  &\geq&   \frac{1}{N_R} \left(  \sum_i  \int_{B(p_i,R)} |\nabla f|^2 dx  + \int_{B(p_i,R)}  V f^2 dx\right)\\
  &\geq & \frac{1}{N_R}      \sum_i   s\left((\Delta + V)_{|B(p_i,R)} \right) \int_{B(p_i,R)}    f^2 dx\\
  &\geq&  \frac{1}{N_R}  \inf_i  s\left((\Delta + V)_{|B(p_i,R)} \right)    \sum_i  \int_{B(p_i,R)}    f^2 dx\\
  &\geq&  \frac{1}{N_R}  \inf_i  s\left((\Delta + V)_{|B(p_i,R)} \right)      \int_{\M}    f^2 dx.\\
\end{eqnarray*}
Here and in the sequel, $(\Delta + V)_{|B(p_i,R)}$  denotes the Schr\"odinger operator on the space $L^2(B(p_i,R))$ and subject to  Neumann boundary conditions. If $V$ is not present, $\Delta_{|B(p_i,R)}$ is the Neumann-Laplacian. It follows from the previous estimates that 
\begin{equation}\label{eq.min s}
s(\Delta + V) \ge\,  \sup_{R > 0} \frac{1}{N_R} \inf_i s\left((\Delta + V)_{|B(p_i,R)} \right).
\end{equation}
Our next task is  to estimate from below $s\left((\Delta + V)_{|B(p,R)} \right)$ independently of the point $p \in \M$. In order to do so we follow an argument in \cite{Shen} by reproducing the proof of the well known Fefferman-Phong lemma. Set $B = B(p,R)$, $| B | = vol(B)$ and denote by $u_B$ the average of a given $u \in H^1(B)$.  A simple calculation shows the equality
\begin{equation*}
\frac{1}{2 |B|} \iint_{B\times B} | u(x) - u(y) |^2 \, dx dy = \int_{B} | u(x) - u_B |^2 \, dx.
\end{equation*} 
Note that the first eigenvalue $\lambda_0$ of $\Delta_{|B}$ is of course $0$ and it is simple so that the second eigenvalue  $\lambda_1$ is $ > 0$.  Note that $\lambda_1 = \lambda_1(p,R)$ may a priori depend on $p$ and $R$. However, if $\M$ satisfies the Poincar\'e inequality \eqref{poincare} then $\lambda_1(p,R) \ge \kappa_R$. Therefore,  one may replace in the estimates below $\lambda_1(p,R)$ by 
$\kappa_R$. We do not use  this  right now  and keep going on with $\lambda_1(p,R)$.  

The variational inequality gives 
$$ \int_{B} | u(x) - u_B |^2 \, dx \le \frac{1}{\lambda_1(p,R)} \int_B | \nabla u(x) |^2 \, dx.$$
Hence,
\begin{equation*}
\frac{1}{2 |B|} \iint_{B\times B} | u(x) - u(y) |^2 \, dx dy \le \frac{1}{\lambda_1(p,R)} \int_B | \nabla u(x) |^2 \, dx.
\end{equation*} 
Hence
\begin{eqnarray*}
&&\hspace{-1cm} \frac{1}{ |B|} \iint_{B\times B}\left[  \frac{\lambda_1(p,R)}{2} | u(x) - u(y) |^2  + V(y) |u(y)|^2 \right] \, dx dy \\ 
&& \hspace{6cm}  \le  \int_B \left[ | \nabla u(x) |^2  + V(x) |u(x) |^2 \right] \, dx.
\end{eqnarray*} 
Next, we develop the square $| u(x) - u(y) |^2$ and use the inequality 
\[ 
2 \frac{\lambda_1(p,R)}{2} u(x) u(y) \le \frac{\lambda_1(p,R)^2}{4} \frac{|u(x)|^2}{ \frac{\lambda_1(p,R)}{2} + V(y)} + |u(y)|^2 (\frac{\lambda_1(p,R)}{2} + V(y))
\]
to obtain 
\begin{equation}\label{truc1}
\frac{\lambda_1(p,R)}{2} \fint_B\frac{V(y)}{ \frac{\lambda_1(p,R)}{2} + V(y)}\,dy \int_B |u(x)|^2\, dx \le \int_B \left[ | \nabla u(x) |^2  + V(x) |u(x) |^2 \right] \, dx.
\end{equation}
It follows from  this latter  inequality that
\begin{eqnarray}
s\left((\Delta + V)_{|B(p,R)} \right) &\ge&  \frac{ \lambda_1(p,R) }{\lambda_1(p,R)  + 2 \| V \|_{L^\infty(B)}} \frac{1}{|B|} \int_B V(y)\, dy  \\ \label{mino1}
&\ge&  \frac{ \lambda_1(p,R) \delta(R) }{\lambda_1(p,R)  + 2 \| V \|_{L^\infty(B)}}.\label{mino2} 
\end{eqnarray}
Here 
$$\delta(R) := \inf_{p \in \M} \fint_{B(p,R)} V(x)\, dx.$$
Finally, using \eqref{eq.min s} we obtain a lower bound for $s(\Delta + V)$ which we state  in the following theorem.

\begin{thm}\label{thm3.1}
Suppose that $\M$ satisfies $(A1)$ and  
let $V$ be a non-negative, locally bounded  potential. Then 
\begin{equation}\label{mino3}
s(\Delta + V) \ge \,  \sup_{R > 0} \inf_i \left[ \frac{1}{N_R} \frac{ \lambda_1(p_i,R)  }{\lambda_1(p_i,R)  + 2 \| V \|_{L^\infty(B(p_i,R))}}\right] \delta(R).
\end{equation}
In particular, if $V \in L^\infty (\M)$ and satisfies the average condition with $\delta(R) > 0$ for some $R > 0$ and $\M$
satisfies the Poincar\'e inequality  \eqref{poincare} with constant  $\kappa_R$,  then
\begin{equation}\label{mino4}
s(\Delta + V) \ge \, \sup_{R > 0}\,   \left[ \frac{1}{N_R} \frac{ \kappa_R}{\kappa_R + 2  \| V \|_{L^\infty(\M)}}\right]  \delta(R). 
\end{equation}
\end{thm}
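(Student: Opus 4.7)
The plan is to reduce the global estimate for $s(\Delta+V)$ to a family of local estimates on the covering balls $B(p_i,R)$, then control each local Neumann spectral bound from below in terms of the mean $\delta(R)$ via a Fefferman--Phong type manipulation.

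For the localisation step I would use $(A1)$. Given $f\in H^1(\M)$, the bounded-overlap inequalities
\[
\frac{1}{N_R}\sum_i \int_{B(p_i,R)} g\,dx \,\le\, \int_\M g\,dx \,\le\, \sum_i \int_{B(p_i,R)} g\,dx
\]
(valid for every non-negative $g$) applied to $g=|\nabla f|^2+Vf^2$ and to $g=f^2$, together with the variational characterisation of the Neumann spectral bound $s\bigl((\Delta+V)_{|B(p_i,R)}\bigr)$, give
\[
\langle (\Delta+V)f,f\rangle \,\ge\, \frac{1}{N_R}\,\inf_i s\bigl((\Delta+V)_{|B(p_i,R)}\bigr)\,\|f\|_2^2.
\]
Taking the infimum in $f$ and the supremum in $R$ yields $s(\Delta+V) \ge \sup_R \tfrac{1}{N_R}\inf_i s\bigl((\Delta+V)_{|B(p_i,R)}\bigr)$.

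The main technical step is then to bound $s\bigl((\Delta+V)_{|B}\bigr)$, for $B=B(p,R)$, from below in terms of $\delta(R)$ and $\|V\|_{L^\infty(B)}$. I would start from the elementary identity
\[
\frac{1}{2|B|}\iint_{B\times B}|u(x)-u(y)|^2\,dx\,dy \,=\, \int_B |u-u_B|^2\,dx,
\]
and use the Neumann Poincar\'e inequality on $B$ with sharp constant $\lambda_1(p,R)$ (the first non-trivial eigenvalue of the Neumann Laplacian) to replace the right-hand side by $\lambda_1(p,R)^{-1}\int_B|\nabla u|^2$. The decisive Fefferman--Phong step is to expand $|u(x)-u(y)|^2$ and bound the cross term by the weighted AM--GM inequality
\[
\lambda_1(p,R)\,u(x)u(y) \,\le\, \frac{\lambda_1(p,R)^2/4}{\lambda_1(p,R)/2+V(y)}|u(x)|^2 + \bigl(\tfrac{\lambda_1(p,R)}{2}+V(y)\bigr)|u(y)|^2,
\]
the weights being chosen precisely to absorb the $V(y)|u(y)|^2$ diagonal contribution. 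Integrating in $y$ and bounding $V(y)\le \|V\|_{L^\infty(B)}$ in the denominator produces
\[
s\bigl((\Delta+V)_{|B(p,R)}\bigr) \,\ge\, \frac{\lambda_1(p,R)\,\delta(R)}{\lambda_1(p,R)+2\,\|V\|_{L^\infty(B(p,R))}}.
\]
This algebraic bookkeeping, which yields the denominator $\lambda_1+2\|V\|_\infty$ rather than the naive $\|V\|_\infty$ alone, is the delicate heart of the argument; everything else is combinatorics on the cover.

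Combining the two bounds and taking $\sup_{R>0}$ gives the first assertion. For the particular case, $(A2)$ ensures $\lambda_1(p_i,R)\ge \kappa_R$ uniformly in $i$; since $t\mapsto t/(t+2\|V\|_{L^\infty(\M)})$ is increasing on $[0,\infty)$, replacing $\lambda_1(p_i,R)$ by $\kappa_R$ and $\|V\|_{L^\infty(B(p_i,R))}$ by the global bound $\|V\|_{L^\infty(\M)}$ only decreases the right-hand side, producing the cleaner form stated.
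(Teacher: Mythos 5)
Your proposal reproduces the paper's proof essentially verbatim: the same localisation via the bounded-overlap covering $(A1)$, the same Fefferman--Phong device (identity $\frac{1}{2|B|}\iint|u(x)-u(y)|^2=\int|u-u_B|^2$, Neumann Poincar\'e with constant $\lambda_1(p,R)$, weighted AM--GM on the cross term with the exact same weight $\tfrac{\lambda_1}{2}+V(y)$), and the same monotonicity observation $\lambda_1(p_i,R)\ge\kappa_R$ under $(A2)$ to pass to the cleaner bound \eqref{mino4}. The argument is correct and is the paper's own.
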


\begin{rmk}\label{rk4.2}
 \begin{itemize}
 \item Using the fact  that $\frac{ab}{a+b} \ge \frac{1}{2} \min (a,b)$ for positive numbers $a, b$, 
 it follows from \eqref{truc1} that
 \begin{equation}\label{mino1-2}
 s\left((\Delta + V)_{|B(p,R)} \right) \ge \frac{1}{ 2}\fint_B \min \left( \frac{\lambda_1(p,R)}{2}, V(y) \right)\, dy.
 \end{equation}
 This estimate can be improved  as follows: if $k=\inf_{x \in K} V(x)$ on a domain $K$ of $\M$, then 
\beq\label{eq:s-rhogeqk}
s\left((\Delta + V)_{|_K}\right) \geq  k + \frac{1}{2} \fint_K \min\left(\frac{\lambda_1(K)}{2} , V(y)-k \right) dy.
\eeq
 \item Obviously, if $V$ is not bounded but  for some positive constant $w_0$, $\inf \{V, w_0\}$ satisfies the average condition for some $R > 0$, then we obtain the previous lower bound for $s(\Delta + V)$ from $s(\Delta + V ) \ge s(\Delta + \inf\{V,w_0\})$.
\item Suppose that the negative part $V^-$ of $V$ is non-trivial and it satisfies the form boundedness condition \eqref{formbdd} for some $\alpha \in [0,1)$. Then obviously
\begin{equation}\label{v-moins}
s(\Delta + V) \ge (1-\alpha) s(\Delta + V^+).
\end{equation}
Therefore, by applying the previous theorem to $\Delta + V^+$ one obtains the same lower bound with the factor $(1-\alpha)$  for $s(\Delta + V)$. In particular, if $V^+$ satisfies the average condition 
\[ \inf_{p \in \M} \fint_{B(p,R)} V^+\, dx  > 0,\]
then $s(\Delta + V) > 0$. 
\end{itemize}
\end{rmk}
\begin{rmk}\label{rk4.3}
Let $K$ be a domain of $\M$. 
It is  possible to obtain an estimate  for the bottom of the spectrum $s((\Delta+V)_{|_K})$ for  potentials  $V$ having negative part.
Set  $\alpha= \frac{\lambda_1(K)}{2}$ and $\beta=\beta(y)=V(y)$. Then 
\begin{align*}
& \fint |\nabla u(y)|^2 + V(y) u(y)^2 dy\\
\geq & \fint_K\fint_K  \left[ \alpha |u(x)-u(y)|^2   +  \beta u(y)^2 \right] dx dy\\
=& \fint_K\fint_K  \left[ \alpha |u(x)|^2   - 2 \alpha u(x) u(y) +   (\alpha+\beta) u(y)^2  \right] dx dy\\
=& \fint_K\fint_K  \left[ \left(\alpha -\frac{\alpha^2} {\alpha+ \beta} \right) |u(x)|^2   +\left( \frac{\alpha} {\sqrt{\alpha+ \beta}} u(x)- \sqrt{\alpha+ \beta} u(y) \right)^2 \right] dx dy\\
\geq & \left(\fint_K  \frac{\alpha \beta} {\alpha+ \beta} dy \right)  \; \fint |u(x)|^2 dx
\end{align*}
provided that for all $y\in K$, $\alpha + \beta(y)>0$, that is
\[
V(y)> - \frac{\lambda_1(K)}{2}.
\]
In the case, $\alpha>0, \beta \geq 0$, one has
\[
 0< \frac{1}{2} \min(\alpha,\beta) \leq \frac{\alpha \beta} {\alpha+ \beta} \leq \min(\alpha,\beta).
\] 
Whereas in the case $\beta<0$, if for some $\kappa \in (0,1)$,  $\alpha + \beta \geq \kappa \alpha$,
\[
 \frac{\beta}{\kappa} \leq \frac{\alpha \beta} {\alpha+ \beta} \leq \beta <0.
\] 
That is, if 
 \beq\label{eq:newclassV-}
  V^-(y)\leq  (1-\kappa) \frac{\lambda_1(K)}{2}, \ y \in K
  \eeq 
  then
 \[
 0> \frac{\frac{\lambda_1(K)}{2} V(y)}{\frac{\lambda_1(K)}{2}+  V(y)} \geq \frac{V(y)}{\kappa}.
 \]
  Thus,  we obtain for  $V$ satisfying  $\eqref{eq:newclassV-}$  for some $\kappa=\kappa(K)\in (0,1)$
 \beq\label{eq:new-s}
 s\left( (\Delta+ V)_{|_K} \right) \geq \frac{1}{2}  \fint_K \min\left(\frac{\lambda_1(K)}{2} ,V^+(y) \right)  -\frac{V^-(y)}{\kappa(K)} dy.
 \eeq
 
 Moreover, for any $k\geq 0$, if there exists $\kappa(K) \in (0,1)$ such that for all $y\in K$,
 \[
 (V(y)-k)^- \leq (1-\kappa(K)) \frac{\lambda_1(K)}{2},
 \]
 then it follows from  \eqref{eq:s-rhogeqk}and \eqref{eq:new-s}  that 
 \beq\label{eq:new-s-k}
 s\left( (\Delta+ V)_{|_K} \right) \geq k+ \frac{1}{2} \fint_K \left[ \min\left(\frac{\lambda_1(K)}{2} ,(V-k)^+(y) \right)  -\frac{(V-k)^-(y)}{\kappa(K)} \right] dy.
 \eeq
 \end{rmk}

We  combine these results and those in the previous section to obtain lower estimates for $s(\Dtp)$ which means lower estimates for the spectral bound $s(\Delta)$ (when $\M$ has infinite volume) and for the spectral gap $\lambda_1(\Delta)$ (when $\M$ has finite volume). Our bounds are given in terms of $\rho(x)$, the smallest eigenvalue of the Ricci tensor at the point $x$. 

\begin{thm}\label{thm-spectral}
Suppose that $\M$ satisfies the covering property  $(A1)$ for each $R > 0$. Suppose  that  $\rho ^-$ is form bounded in the sense of \eqref{formbdd} for some $\alpha \in [0, 1)$ and $\rho^+$ satisfies the average condition
\[ \delta(R) := \inf_{p \in \M} \fint_{B(p,R)} \rho^+(x)\, dx > 0.
\]

 Then 
\begin{equation}\label{mino3-rho}
s(\Dtp)  \ge \,  \sup_{R > 0} \inf_i \left[ \frac{1-\alpha}{N_R} \frac{ \lambda_1(p_i,R) }{\lambda_1(p_i,R)  + 2 \| \rho^+ \|_{L^\infty(B(p_i,R))}}\right]\delta(R).
\end{equation}
If  in addition $\rho^+  \in L^\infty (\M)$ and  $\M$
satisfies the Poincar\'e inequality  \eqref{poincare} with constant $\kappa_R$, then
\begin{equation}\label{mino4-rho}
s(\Dtp) \ge \, \sup_{R > 0}\,   \left[ \frac{1-\alpha}{N_R} \frac{ \kappa_R}{\kappa_R + 2  \| \rho^+ \|_{L^\infty(\M)}} \right]  \delta(R). 
\end{equation}
\end{thm}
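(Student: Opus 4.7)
The plan is to combine the two main ingredients already developed: the Hodge/Schrödinger comparison from Section 3 which reduces estimating $s(\Dtp)$ to estimating the spectral bound of a Schrödinger operator involving the Ricci lower bound, and the localisation-plus-Fefferman–Phong argument from Section 4 which estimates such a spectral bound in terms of the average condition $\delta(R)$.

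First I would invoke Proposition \ref{prop3.1.1} to obtain $s(\Dtp) \ge s(\Delta+\rho)$. Then, using the form boundedness of $\rho^-$ with constant $\alpha\in[0,1)$ as in \eqref{v-moins} (see Remark \ref{rk4.2}), I would bring this down to $s(\Delta+\rho) \ge (1-\alpha)\, s(\Delta+\rho^+)$. At this point the problem is purely about estimating from below the spectral bound of the nonnegative Schrödinger operator $\Delta + \rho^+$, so I am in the setting of Theorem \ref{thm3.1} with potential $V = \rho^+$.

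Next I would apply Theorem \ref{thm3.1} directly. Since $\M$ satisfies the covering property $(A1)$, the bound \eqref{mino3} yields
\[
s(\Delta+\rho^+) \;\ge\; \sup_{R>0}\, \inf_i \left[\frac{1}{N_R}\, \frac{\lambda_1(p_i,R)}{\lambda_1(p_i,R)+2\|\rho^+\|_{L^\infty(B(p_i,R))}}\right]\delta(R),
\]
and multiplying by $(1-\alpha)$ produces the first assertion \eqref{mino3-rho}. For the second assertion, I would use the fact that the Poincaré inequality \eqref{poincare} on each ball $B(p_i,R)$ with constant $\kappa_R$ forces $\lambda_1(p_i,R) \ge \kappa_R$; then the function $t\mapsto t/(t+c)$ is increasing for any $c>0$, so one may replace $\lambda_1(p_i,R)$ by $\kappa_R$ in both the numerator and denominator, and replace $\|\rho^+\|_{L^\infty(B(p_i,R))}$ by the uniform bound $\|\rho^+\|_{L^\infty(\M)}$. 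The infimum over $i$ then becomes trivial and \eqref{mino4-rho} drops out.

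There is no genuine obstacle here beyond chaining together the previously established results; the only points requiring a little care are (i) the monotonicity argument used to pass from $\lambda_1(p_i,R)$ to $\kappa_R$ in the quotient, which must be justified by the positivity of $\kappa_R$ and $\|\rho^+\|_{L^\infty}$, and (ii) verifying that the hypotheses on $\rho^+$ ($L^1_{loc}$, locally bounded) that were implicit in Theorem \ref{thm3.1} are inherited from the standing assumption that $\rho^-$ is form bounded and that $\rho^+$ satisfies \eqref{meanR} (in particular, making sure the Schrödinger operator $\Delta+\rho$ is well defined via \eqref{eq-form-moins} so that Proposition \ref{prop3.1.1} applies). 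Once these bookkeeping points are checked, the theorem follows as the concatenation of the three inequalities above.
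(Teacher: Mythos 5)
Your proposal is correct and follows exactly the paper's own argument: the paper's proof is literally stated as a combination of \eqref{egalite0} (i.e.\ Proposition \ref{prop3.1.1}), the inequality \eqref{v-moins} to handle $\rho^-$, and Theorem \ref{thm3.1} applied to $V=\rho^+$, with the passage to \eqref{mino4-rho} using $\lambda_1(p_i,R)\ge\kappa_R$ together with the monotonicity of $t\mapsto t/(t+c)$ exactly as you describe.
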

\begin{proof} This is combination of \eqref{egalite0}, \eqref{v-moins} and Theorem \ref{thm3.1} with $V = \rho$.
\end{proof}

\section{A Bonnet-Myers type theorem}\label{sec5}

In this short section we give a proof for Theorems \ref{thm2} and \ref{thm1.6}.

\begin{proof}[Proof of Theorem \ref{thm2}]
 Suppose that  $\M$  satisfies $(A1)$, $(A2)$ and  the volume of balls has a sub-exponential  growth. We also suppose that $\inf \{ \rho ^+, w_0\}$ satisfies the average condition \eqref{meanR} for some $R > 0$ and some constant $w_0>0$ and  $\rho ^-$ satisfies \eqref{formbdd} for some $\alpha \in [0,1)$. If  $\M$ has infinite volume, then  $s(\Delta) = 0$ (actually, even the infimum of the essentiel spectrum is $0$, see \cite{Brooks}). On the other hand, by Proposition \ref{prop3.1.1} and Theorem \ref{thm3.1}
 \[
 s(\Delta) \ge (1-\alpha) s(\Delta + \rho^+) \ge (1-\alpha) s(\Delta + \inf\{\rho^+, w_0\}) > 0.
 \]
This is a contradiction with $s(\Delta) = 0$. Hence $\M$ has finite volume. 

Next, suppose that $\M$ is not compact and that for some $x_0 \in \M$
\begin{equation}\label{CGT}
Ricci(x) \ge - \frac{n}{n-1} \frac{1}{d(x,x_0)^2} 
\end{equation}
for $d(x,x_0)$ large enough. Then Theorem 4.9 (ii) from \cite{CGT} implies that $\M$ has infinite volume. We have seen already that this cannot be the case. Thus, $\M$ is compact. This proves the theorem.
\end{proof}

\begin{proof}[Proof of Theorem \ref{thm1.6}] We prove that  the volume of $\M$ is finite. The second assertion follows as in the previous proof. Note that, as already mentioned in the introduction,  the properties $(A1)$ and $(A2)$ are both satisfied since the manifold has Ricci curvature bounded from below.

Suppose for a contradiction that 
 $vol(M) = \infty$. Then  the two operators $\Delta_\perp$ and $\Delta$ are the same. In addition,  $\Delta$ is injective and hence  by duality 
$\overline{Im(\Delta^\frac{1}{2})} = L^2(\M)$. 

Recall that by Corollary \ref{sigma=}, 
\begin{equation}\label{eq-ess-spec}
\sigma_{ess}(\Delta)  = \sigma_{ess}(\Dtf_{\overline{d}})  \subseteq \sigma_{ess}(\Dtf).
\end{equation} 
Let $O$ be a smooth relatively compact open subset of $\M$ and denote by $\chi_O$ its  indicator function. We have 
 $$(1 + \Dtf)^{-1}  - (1 + \Dtf + Ricci^- \chi_O)^{-1} = (1 + \Dtf + Ricci^- \chi_O)^{-1} Ricci^- \chi_O(1 + \Dtf)^{-1}.$$
 The operator $\chi_O (1 + \Dtf)^{-1}$ maps into the Sobolev space $H^1(\Lambda^1)$ over the subset $O$. It follows from the Rellich compactness theorem that $\chi_O (1 + \Dtf)^{-1}$ is a compact operator on $L^2(\Lambda^1)$. Therefore, the difference $(1 + \Dtf)^{-1}  - (1 + \Dtf + Ricci^- \chi_O)^{-1}$ is a compact operator and hence  $\Dtf$ and $\Dtf + Ricci^- \chi_O$ have the same essential spectra. Applying  \eqref{eq-ess-spec} yields
 \begin{equation}\label{infsigmaess}
  \inf \sigma_{ess}(\Delta) \ge   \inf \sigma_{ess}(\Dtf)  =  \inf \sigma_{ess}(\Dtf + Ricci^- \chi_O) \ge s(\Dtf + Ricci^- \chi_O).
  \end{equation} 
  On the other hand, as we have seen in previous sections, $s(\Dtf + Ricci^- \chi_O) \ge s(\Delta + \rho^+ - \rho^- \chi_{\M\setminus O})$. 
Now we use the assumption that $Ricci$ is asymptotically non-negative. Given $\epsilon >0$ and let $O$ such that $\rho^- \chi_{\M\setminus O} < \epsilon$. Hence
\begin{eqnarray*}
 s(\Delta + \rho^+ - \rho^- \chi_{\M\setminus O}) &\ge&s(\Delta + \rho^+) - \epsilon\\
 &\ge& s(\Delta + \inf\{ \rho^+, w_0\} ) - \epsilon.
 \end{eqnarray*}
 Since $\inf\{ \rho^+, w_0\}$ satisfies the mean condition, we obtain from Theorem \ref{thm3.1} that $s(\Delta + \inf\{ \rho^+, w_0\} ) > 0$.  So $s(\Delta + \inf\{ \rho^+, w_0\} ) - \epsilon > 0$ for $\epsilon$ small. Inserting this into 
\eqref{infsigmaess} gives $ \inf \sigma_{ess}(\Delta) > 0$. In order to conclude, recall that the assumption on $Ricci^-$ implies that the volume has sub-exponential  growth (cf. \cite{Sturm}) and this together with the fact that the volume is infinite implies that  $ \inf \sigma_{ess}(\Delta) = 0$ (cf. \cite{Brooks}). This gives the contradiction and finishes the proof of the theorem. 
\end{proof}

As mentioned in the introduction we have made no attempt to obtain a diameter estimate for $\M$. Such estimates were obtained from $L^p$-type conditions on $\rho$ in \cite{Petersen-Sprouse} and \cite{aubry}  or from  a  Kato type property in \cite{Carron-Rose} and \cite{Rose}. We refer to these papers for additional information and references on related works. 

The first part of Theorem \ref{thm1.6} gives that $\M$ has finite measure and  the condition \eqref{CGT} is involved  in order to guaranty that $\M$ is compact. Once we have $\M$ has finite measure we may invoke  other conditions which allow to conclude  that $\M$ is compact. For example, $\M$ satisfies the log-Sobolev inequality \cite{Saloff2} or that the heat kernel and its inverse satisfy integrability conditions \cite{Gong-Wang}. 

 \begin{rmk} We have assumed in Theorem \ref{thm1.6} that $Ricci$ is asymptotically non-negative but we see from the proof that we can weaken this assumption in the following way. Since by Theorem \ref{thm3.1} $s(\Delta + \inf\{ \rho^+, w_0\} ) $  is larger than some  constant $\nu = \nu(R, \delta(R), \|\inf\{ \rho^+, w_0\} \|_\infty, N_R) > 0$ it is enough to have $\rho^-(x) < \nu$ outside a compact subset of $\M$. 
 \end{rmk}


\section{Weighted Riemannian manifolds}\label{sec6}

In the previous  parts of this paper we have chosen, for clarity of the exposition,  to deal  with the Laplace-Beltrami operator but our results extend easily with the same  ideas of proofs to the more general context of weighted Riemannian manifold. So we can consider Ornstein-Uhlenbeck type operators. 

Interesting examples arise already in the case of the Euclidean manifold $\R^n$ endowed with a probability  measure $d \mu = e^{-V} dx$. An important question is then to know whether the Poincar\'e inequality  holds and to obtain  some  meaningful  estimates for the spectral gap.    A typical example  is  the Ornstein-Uhlenbeck operator which is  the operator associated to  the Gaussian measure, i.e., $V(x) = \frac{|x|^2}{2}$. The potential is uniformly convex
and it  is the prototype of operators that satisfy the  $CD(k,\infty)$  Bakry-\'Emery dimension criterion.  Note that in this case, the $CD(k,\infty)$  condition is equivalent to  the fact that the Hessian  of $V$ is bounded from below in the quadratic form sense. We write this as $\Hess V (x)\geq k I_n$ for all $x\in \R^n$.  


In the case where $k > 0$, the Poincar\'e  inequality holds (and also the log-Sobolev) and  the spectral gap is bounded from below by $k$. See \cite{Bakry-Gentil-Ledoux}, Proposition 4.8.1.

In the next section we provide estimates of the spectral gap  of perturbations  of the exponential power distribution
$\frac{1}{Z_\alpha} e^{-\frac{|x|^\alpha}{\alpha}}$. For $\alpha \not=2$, these measures are  log-concave but not uniformly log-concave and the Bakry-\'Emery argument fails. Indeed, they only satisfy the $CD(0,\infty)$ criterion.
In connection with this we mention the  famous KLS conjecture which asserts the existence of a positive universal lower bound  (and in particular independent of the dimension) for the spectral gap of any (isotropic) log-concave measure on $\R^n$. We refer to \cite{A-GB} and also to \cite{YChen} for some recent progress.

We now describe the setting and notations. We denote again by $\M$ a complete Riemannian manifold  and we introduce a measure $d\mu= e^{-V} dx$  with a smooth potential $V$. Here, $dx$ stands as before for the standard Riemannian volume measure.  We still denote by  $d$ the exterior derivative on differential forms but here we denote $d_\mu^*$ its formal adjoint with respect to the measure $\mu$. One can then define  as previously  the Hodge Laplacian operators  (also called Witten Laplacians). 
The only difference in the construction is that in the definitions of the quadratic forms $\fra,\frb, \overset{\rightarrow}{\fra}$ and  $\overset{\rightarrow}{\fra_1}$ one has to replace the Riemannian measure $dx$ by the measure $d\mu$.  The Laplace-Beltrami on functions is $\Delta_\mu = d_\mu^* d$,  the Hodge Laplacian on forms is 
$\Dtf_\mu= d_\mu^* d  + d d_\mu^*$ and its part  on the closure of exact forms is $ \Dtf_{\mu,\overline{d}}=d d_\mu^*$. 

Assumption $(A1)$ is only a metric property and does not change in this setting. However, $(A2)$ means  that we require the Poincar\'e inequality with respect  to the measure $\mu$. Concerning the important Bochner's formula, it reads now as 
  $\Dtf_\mu = \tilde{\Delta}_\mu + \mathcal R$  where $\mathcal R$ is the generalized Ricci tensor given by 
  $ {\mathcal R} = Ricci + \Hess V$
   and 
$\tilde{\Delta}_\mu = \nabla^* \nabla  + \nabla_{\nabla V} $ is the rough Laplacian (here $\nabla$ is the Levi-Civita connexion). For all this, we refer to \cite{Coulhon-Devyver-Sikora}, Appendix A. 

In this setting, the domination property is formulated as follows. 
\begin{prop}\label{domination-mu}
Let $\rho(x)$ be the smallest eigenvalue of $\mathcal{R}(x) = Ricci(x) +\Hess V(x)$. Suppose that $\rho^-$ is form-bounded in the sense of \eqref{eq-form-moins}, that is  there exist $\alpha \in [0, 1)$ and $c_\alpha \in \R$ such that
\[  \int_{\M} \rho^- |u|^2\, d\mu \le \alpha \left[ \int_{\M} \nabla u. \nabla v\, d\mu + \int_{\M} \rho^+ u v \, d\mu \right] + c_\alpha \int_{\M} |u|^2\, d\mu
\]
for all $u \in L^2(\M, d\mu)$ such that $\int_M | \nabla u |^2\, d\mu + \int_\M \rho^+ |u|^2\, d\mu < \infty$. 
Then for $\omega \in L^2(\Lambda^1, d\mu)$ we have 
\begin{equation}\label{Hodge-domination-mu}
| e^{-t\Dtf_\mu} \omega (x) |_x \le e^{-t(\Delta_\mu + \rho)} |\omega(x)|_x.
\end{equation}
\end{prop}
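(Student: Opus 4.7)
The plan is to mimic the argument given at the end of Section~\ref{sec2} for the unweighted case, with the Riemannian measure $dx$ replaced by $d\mu=e^{-V}dx$ throughout, the Laplacian $\Delta$ replaced by $\Delta_\mu=d_\mu^* d$, the rough Laplacian $\tilde\Delta$ replaced by $\tilde\Delta_\mu=\nabla^*\nabla+\nabla_{\nabla V}$, and Bochner's formula in the form $\Dtf_\mu=\tilde\Delta_\mu+\mathcal R$ with $\mathcal R=Ricci+\Hess V$. The overall strategy is: (i) establish domination of $e^{-t\tilde\Delta_\mu}$ by $e^{-t\Delta_\mu}$; (ii) convert this into a quadratic-form inequality via Ouhabaz's domination criterion; (iii) add the pointwise contribution of $\mathcal R$; (iv) apply the criterion once more to deduce domination of $e^{-t\Dtf_\mu}$ by $e^{-t(\Delta_\mu+\rho)}$.

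First, I would prove the Kato-type pointwise inequality
\[
|e^{-t\tilde\Delta_\mu}\omega(x)|_x\le e^{-t\Delta_\mu}|\omega(x)|_x,\qquad \omega\in L^2(\Lambda^1,d\mu),
\]
which is the weighted analogue of \eqref{Hodge-domination-1}. The argument of \cite{HSU} adapts directly since $\tilde\Delta_\mu$ is the self-adjoint rough Laplacian for the scalar product associated to $d\mu$, and the pointwise Kato inequality for the Levi-Civita connection is a metric-only statement, independent of the weight. By the characterization of domination of semigroups (\cite{Ouhabaz99}, or Theorem~2.30 of \cite{Ouhabaz05}), this pointwise domination is equivalent to the form domain of $\tilde\Delta_\mu$ being an ideal of $H^1(\M,d\mu)$ together with the (weighted) Kato inequality
\[
\int_\M \nabla\omega.\nabla\eta\,d\mu\ \ge\ \int_\M \nabla|\omega|.\nabla|\eta|\,d\mu,
\]
for every $\omega,\eta$ with $\omega(x).\eta(x)=|\omega(x)|_x|\eta(x)|_x$, i.e.\ $\omega(x)=\lambda(x)\eta(x)$ with $\lambda(x)\ge 0$.

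Next, for such $\omega,\eta$, the bound on the smallest eigenvalue of $\mathcal R(x)$ gives pointwise
\[
\mathcal R(x)\omega(x).\eta(x)=\lambda(x)\,\mathcal R(x)\eta(x).\eta(x)\ \ge\ \lambda(x)\rho(x)|\eta(x)|_x^2=\rho(x)|\omega(x)|_x|\eta(x)|_x.
\]
Adding this pointwise inequality to the weighted Kato inequality and integrating against $d\mu$ yields
\[
\int_\M\!\bigl[\nabla\omega.\nabla\eta+\mathcal R\,\omega.\eta\bigr]\,d\mu\ \ge\ \int_\M\!\bigl[\nabla|\omega|.\nabla|\eta|+\rho|\omega|_x|\eta|_x\bigr]\,d\mu.
\]
The form boundedness hypothesis on $\rho^-$ ensures that the form on the right (associated to $\Delta_\mu+\rho$ in $L^2(\M,d\mu)$) is closed and bounded below, and, together with Bochner's formula $\Dtf_\mu=\tilde\Delta_\mu+\mathcal R$, that the form on the left is the closed lower-bounded form of $\Dtf_\mu$. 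Checking the ideal property exactly as in the unweighted argument, a second application of the Ouhabaz domination criterion then produces the desired pointwise bound
\[
|e^{-t\Dtf_\mu}\omega(x)|_x\ \le\ e^{-t(\Delta_\mu+\rho)}|\omega(x)|_x.
\]

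The main obstacle I anticipate is bookkeeping at the form-domain level rather than any conceptual novelty: one needs to justify that the form-boundedness condition on $\rho^-$ written with respect to $d\mu$ is enough to make the form of $\Delta_\mu+\rho$ closed with the right domain, and that the rough Laplacian $\tilde\Delta_\mu$ is genuinely self-adjoint on a suitable weighted Sobolev space of $1$-forms so that the Ouhabaz criterion applies without ambiguity. Once these domain issues are settled, every other step is a straightforward transcription of the unweighted proof.
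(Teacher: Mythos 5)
Your proposal matches the paper's proof in essence: the paper also reduces to (i) domination of $e^{-t\tilde\Delta_\mu}$ by $e^{-t\Delta_\mu}$ and (ii) handling the perturbation by $\mathcal R$ exactly as in the unweighted case via the Ouhabaz domination criterion, the only difference being that the paper simply cites Theorem~A.2 of \cite{Coulhon-Devyver-Sikora} for step~(i) rather than re-deriving the weighted Kato/HSU inequality as you sketch. Your re-derivation is sound (the pointwise Kato inequality is indeed metric-only and the drift term is absorbed at the quadratic-form level), so this is a cosmetic, not a substantive, departure.
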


\begin{proof} We follow the same arguments as for  \eqref{Hodge-domination}. One first establishes the  domination for the semigroups
of $\tilde{\Delta}_\mu$ and $\Delta_\mu$. This is stated in  \cite{Coulhon-Devyver-Sikora} (Theorem A.2). The perturbation by $\mathcal R$ can be handled exactly is in the unweighted case. 
\end{proof} 


Note also that $Ricci +\Hess V$ is precisely the tensor that appears in the  Bakry-\'Emery $\Gamma_2$ operator and that the  Bakry-\'Emery $CD(k,\infty)$ criterion holds if and only if $Ricci(x) + \Hess V(x) \geq k Id$ for all $x\in \M$ (see \cite{Bakry-Gentil-Ledoux}, Appendix C.6.3).
In this section, we work in the same spirit as in the unweighted case of the previous sections and in particular, we do not assume a uniform lower bound for $Ricci(x) + \Hess V(x)$. We can reproduce the results there with similar proofs. We  restate here Proposition \ref{prop3.1.1}  and Theorem \ref{thm-spectral} in the context of weighted manifolds. 
\begin{prop}\label{prop3.1.1-w}
Let $\rho$ be the smallest eigenvalue of the tensor ${\mathcal R}(x) = Ricci(x) + HessV(x)$ and  suppose  that  $\rho ^-$ is form bounded in the sense of \eqref{formbdd} for some $\alpha \in [0, 1)$. 
\begin{itemize}
\item If $vol_\mu({\mathcal M}) = \infty$, then $s(\Delta_\mu) \ge s(\Delta_\mu + \rho) \ge (1-\alpha) s(\Delta_\mu+ \rho^+)$. In particular, if $\rho \ge 0$, then $s(\Delta_\mu) = s(\Delta_\mu + \rho )$.
\item If  $vol_\mu({\mathcal M}) < \infty$, then $\lambda_1(\Delta_\mu) \ge s(\Delta_\mu + \rho) \ge (1-\alpha)s(\Delta_\mu+\rho^+)$. 
\end{itemize}
\end{prop}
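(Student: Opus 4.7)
The plan is to mirror the proof of Proposition \ref{prop3.1.1}, replacing each unweighted object by its natural weighted counterpart and invoking Proposition \ref{domination-mu} at the final step. Let $L$ denote $\Delta_\mu$ when $vol_\mu(\M) = \infty$ and its part on the orthogonal complement of the constants in $L^2(\M, d\mu)$ when $vol_\mu(\M) < \infty$. Introduce the corresponding Riesz transform $R_\mu = d\, L^{-1/2}$ and the weighted Hodge Laplacian on exact forms $\Dtf_{\mu,\overline{d}}$, realised on the closure in $L^2(\Lambda^1, d\mu)$ of $\{df : f \in H^1(\M, d\mu)\}$.

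Next I would re-run the arguments of Propositions \ref{prop2.1.0}--\ref{prop2.1} and Corollary \ref{sigma=} in the weighted setting. These arguments rely only on the formal structure $\Delta_\mu = d_\mu^* d$, $\Dtf_\mu = d_\mu^* d + d\, d_\mu^*$, together with the fact that $d\omega = 0$ for $\omega$ in the closure of exact forms; none of the steps uses anything specific to the Riemannian measure. Consequently the intertwining
\[
R_\mu\, e^{-tL}\, R_\mu^*\, \omega \;=\; e^{-t \Dtf_{\mu,\overline{d}}}\, \omega, \qquad \omega \in L^2_{\overline{d}}(\Lambda^1, d\mu),
\]
holds, and therefore $\sigma(L) = \sigma(\Dtf_{\mu,\overline{d}})$.

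From this spectral identity the rest of the chain is identical to the proof of Proposition \ref{prop3.1.1}. The variational principle, together with the identity $d\omega = 0$ on $L^2_{\overline{d}}(\Lambda^1, d\mu)$ and the inclusion of admissible form domains, gives $s(\Dtf_{\mu,\overline{d}}) \ge s(\Dtf_\mu)$. The pointwise domination \eqref{Hodge-domination-mu} supplied by Proposition \ref{domination-mu} yields $s(\Dtf_\mu) \ge s(\Delta_\mu + \rho)$, and the form-bound assumption on $\rho^-$ produces $s(\Delta_\mu + \rho) \ge (1-\alpha)\, s(\Delta_\mu + \rho^+)$. Chaining these inequalities with $s(L) = s(\Dtf_{\mu,\overline{d}})$ delivers the required bounds in both volume regimes; in the case $\rho \ge 0$ one combines with the trivial inequality $s(\Delta_\mu) \le s(\Delta_\mu + \rho)$ to obtain equality.

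The only real obstacle is a bookkeeping verification: one must check that the resolvent commutation $d(I + L)^{-1} g = (I + \Dtf_\mu)^{-1} dg$ underpinning the intertwining persists when $d^*$ is replaced by $d_\mu^*$, and that $L^2_{\overline{d}}(\Lambda^1, d\mu)$ retains the two features used in Section \ref{sec4}, namely that $d\omega = 0$ on it and that it is invariant under $e^{-t\Dtf_\mu}$. Both properties are structural and depend only on the abstract Hilbert-space setting with the weighted codifferential, so once they are recorded the argument is a notation-level translation of Sections \ref{sec2} and \ref{sec4}.
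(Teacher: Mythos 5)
Your proposal follows exactly the route the paper intends: the paper states the weighted result without a separate proof, remarking only that "we can reproduce the results there with similar proofs," and your outline — weighted Riesz transform $R_\mu = dL^{-1/2}$, re-running the intertwining and spectral-equality arguments of Sections \ref{sec2} and \ref{sec4} with $d_\mu^*$ in place of $d^*$, then applying the domination of Proposition \ref{domination-mu} and the form bound on $\rho^-$ — is precisely that reproduction, correctly chained. No gap.
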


\begin{thm}\label{thm-spectral-mu}
Suppose that $\M$ satisfies the covering property  $(A1)$ for each $R > 0$. Let $\rho(x)$ be the smallest eigenvalue of $Ricci(x) + \Hess V(x)$. We assume that $\rho ^-$ is form bounded in the sense of \eqref{formbdd} for some $\alpha \in [0, 1)$ and $\rho^+$ satisfies the average condition
\begin{equation}\label{ave-mu}
 \delta(R) := \inf_{p \in \M} \fint_{B(p,R)} \rho^+(x)\, d\mu(x) > 0.
\end{equation}
1) Suppose that $\mu(\M) = \infty$. Then
\begin{equation}\label{mino3-rho-mu}
s(\Delta_\mu)  \ge \,  \sup_{R > 0} \inf_i \left[ \frac{1-\alpha}{N_R} \frac{ \lambda_1(p_i,R) }{\lambda_1(p_i,R)  + 2 \| \rho^+ \|_{L^\infty(B(p_i,R))}}\right]\delta(R).
\end{equation}
If  in addition $\rho^+  \in L^\infty (\M, \mu)$ and  $\M$
satisfies the Poincar\'e inequality  \eqref{poincare} for the measure $\mu$, then
\begin{equation}\label{mino4-rho-mu}
s(\Delta_\mu) \ge \, \sup_{R > 0}\,   \left[ \frac{1-\alpha}{N_R} \frac{ \kappa_R}{\kappa_R + 2  \| \rho^+ \|_{L^\infty(\M)}} \right]  \delta(R). 
\end{equation}
2) If $\mu(\M) < \infty$, then $\Delta_\mu$ has a spectral gap $\lambda_1(\mu)$ and $\lambda_1(\mu)$ satisfies the same lower bounds as  in \eqref{mino3-rho-mu} and \eqref{mino4-rho-mu}. 
\end{thm}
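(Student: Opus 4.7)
The plan is to mirror, line by line, the chain of arguments that produced Theorem~\ref{thm-spectral} in the unweighted setting, replacing the Riemannian volume $dx$ by $d\mu = e^{-V}\,dx$ throughout and using Proposition~\ref{prop3.1.1-w} in place of Proposition~\ref{prop3.1.1}. The starting point is the reduction
\[
s(\Dtp_\mu) \;\ge\; s(\Delta_\mu + \rho) \;\ge\; (1-\alpha)\, s(\Delta_\mu + \rho^+),
\]
which is exactly the content of Proposition~\ref{prop3.1.1-w}. In both cases $\mu(\M)=\infty$ and $\mu(\M)<\infty$ this reduces the theorem to proving a lower bound of the form
\[
s(\Delta_\mu + V) \;\ge\; \frac{1}{N_R}\, \inf_i\, \frac{\lambda_1(p_i,R)\,\delta(R)}{\lambda_1(p_i,R) + 2\|V\|_{L^\infty(B(p_i,R))}}
\]
for the non-negative potential $V=\rho^+$, with $\delta(R)$ taken with respect to $d\mu$.

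Next, I would localize using the covering property $(A1)$: exactly as in the derivation of \eqref{eq.min s}, the inequality \eqref{eq:covering}, written with $d\mu$ in place of $dx$, gives
\[
s(\Delta_\mu+V) \;\ge\; \sup_{R>0}\, \frac{1}{N_R}\, \inf_i\, s\bigl((\Delta_\mu+V)_{|B(p_i,R)}\bigr),
\]
where $(\Delta_\mu+V)_{|B}$ is now the Neumann realization on $L^2(B,d\mu)$ coming from the form $\int_B |\nabla u|^2 d\mu + \int_B V|u|^2 d\mu$ on $H^1(B)$. The whole covering step is purely a Hilbert-space argument with the underlying measure replaced, and requires nothing beyond $(A1)$.

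The core step is then the Fefferman--Phong type estimate on a single ball $B=B(p,R)$. Setting $u_B := \fint_B u\,d\mu$, the identity
\[
\frac{1}{2\mu(B)}\iint_{B\times B} |u(x)-u(y)|^2\, d\mu(x)\, d\mu(y) \;=\; \int_B |u-u_B|^2\, d\mu
\]
is a formal consequence of $\mu(B)<\infty$ and holds without any modification. Combining it with the Neumann Poincaré inequality on $(B,d\mu)$, whose best constant I denote $\lambda_1(p,R)$ and which is $\ge \kappa_R$ under $(A2)$ for $\mu$, yields
\[
\frac{1}{\mu(B)}\iint_{B\times B}\!\!\left[\frac{\lambda_1(p,R)}{2}|u(x)-u(y)|^2 + V(y)|u(y)|^2\right] d\mu(x)\, d\mu(y) \;\le\; \int_B\!\bigl[|\nabla u|^2 + V|u|^2\bigr]\, d\mu.
\]
The same expansion of $|u(x)-u(y)|^2$ and the same Young inequality used to obtain \eqref{truc1} then give, verbatim,
\[
\frac{\lambda_1(p,R)}{2}\,\fint_B\frac{V(y)}{\tfrac{\lambda_1(p,R)}{2}+V(y)}\,d\mu(y) \int_B |u|^2\, d\mu \;\le\; \int_B \bigl[|\nabla u|^2 + V|u|^2\bigr]\, d\mu,
\]
whence $s\bigl((\Delta_\mu+V)_{|B(p,R)}\bigr) \ge \lambda_1(p,R)\,\delta(R)/\bigl(\lambda_1(p,R)+2\|V\|_{L^\infty(B)}\bigr)$ by the definition of $\delta(R)$ with respect to $\mu$.

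Assembling the three steps (reduction via Proposition~\ref{prop3.1.1-w}, localization via $(A1)$, and the one-ball lower bound) yields \eqref{mino3-rho-mu}; inserting the Poincaré bound $\lambda_1(p_i,R)\ge \kappa_R$ from $(A2)$ for $\mu$ and $\|\rho^+\|_{L^\infty(B(p_i,R))}\le \|\rho^+\|_{L^\infty(\M)}$ gives \eqref{mino4-rho-mu}. For assertion~2), $\mu(\M)<\infty$ implies $0\in\sigma(\Delta_\mu)$ with kernel the constants, so $\Dtp_\mu = \Delta_\mu(I-\Pi)$ and $s(\Dtp_\mu)=\lambda_1(\mu)$; the same lower bounds then apply verbatim. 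The only step that is not a transparent copy of the unweighted proof is the Fefferman--Phong inequality on a ball with respect to $\mu$, and I expect this to be the main subtlety mainly because one needs the Neumann Poincaré constant of $(B,d\mu)$, i.e.\ the metric assumption $(A2)$ has to be read with respect to $\mu$; otherwise all estimates are formal manipulations insensitive to which measure is used.
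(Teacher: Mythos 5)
Your proof is correct and follows the same route the paper takes (which the paper itself only sketches by saying the unweighted arguments carry over with $dx$ replaced by $d\mu$): first reduce via Proposition~\ref{prop3.1.1-w} to a lower bound for $s(\Delta_\mu+\rho^+)$, then localize with $(A1)$, then run the Fefferman--Phong ball estimate with respect to $d\mu$. You correctly identify the only point of attention, namely that the Poincar\'e constant and the average $\delta(R)$ must be taken with respect to $\mu$, and that the ball identity and Young's inequality step are measure-agnostic once $\mu(B)<\infty$.
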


The result on the compactness of the manifold is reserved to the unweighted Laplace-Beltrami operator but the first part of Theorem \ref{thm2} still holds in the weighted case. 

\begin{thm}\label{thm2-mu}
Let $\M$ and $\mu = e^{-V} dx$ be a before and  $\rho(x)$ be  the smallest eigenvalue of $Ricci(x) + \Hess V(x)$. 
Suppose that the manifold  $\M$  satisfies $(A1)$ and  $(A2)$ for the measure $\mu$.  Assume also that  the $\mu$-volume of balls has a sub-exponential  growth and  that $\inf \{ \rho ^+, w_0\}$ satisfies the average condition \eqref{ave-mu} for some $R > 0$ and some constant $w_0>0$. Suppose that $\rho ^-$ satisfies \eqref{formbdd} for some $\alpha \in [0,1)$. 
Then $\M$ has finite $\mu$-volume. 
\end{thm}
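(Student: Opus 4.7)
The plan is to mirror exactly the first part of the proof of Theorem \ref{thm2}, substituting the weighted objects throughout. Argue by contradiction: suppose $\mu(\M) = \infty$. The first ingredient needed is the weighted analogue of Brooks' result, namely that sub-exponential $\mu$-volume growth of balls forces $s(\Delta_\mu) = 0$ (and in fact $\inf \sigma_{\mathrm{ess}}(\Delta_\mu) = 0$). This is a known extension of \cite{Brooks} to the Witten/Ornstein--Uhlenbeck setting and can be verified by the standard test-function argument: for a point $x_0$ and large $r$, use cutoffs of the form $\varphi_r = \psi(d(x_0, \cdot)/r)$ with $\psi$ an appropriate Lipschitz plateau function, and estimate the weighted Rayleigh quotient $\int_\M |\nabla \varphi_r|^2 d\mu/\int_\M \varphi_r^2 d\mu$ using the sub-exponential growth assumption to show it tends to $0$.

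Next, apply Proposition \ref{prop3.1.1-w} to obtain
\[
s(\Delta_\mu) \ge (1-\alpha)\, s(\Delta_\mu + \rho^+).
\]
Since $\inf\{\rho^+, w\} \le \rho^+$ pointwise, monotonicity of the spectral bound with respect to the potential gives
\[
s(\Delta_\mu + \rho^+) \ge s(\Delta_\mu + \inf\{\rho^+, w\}).
\]
Now the potential $V := \inf\{\rho^+, w\}$ is bounded (by $w$) and, by the hypothesis on averages, satisfies $\inf_{p\in\M} \fint_{B(p,R)} V\, d\mu > 0$ for some $R>0$. Apply the lower bound \eqref{mino3-rho-mu} of Theorem \ref{thm-spectral-mu} (with $\rho$ replaced by $V$, so $\rho^- = 0$ and $\alpha = 0$) to conclude
\[
s(\Delta_\mu + V) \ge \frac{1}{N_R}\,\frac{\lambda_1(p_*,R)}{\lambda_1(p_*,R) + 2w}\,\delta(R) > 0,
\]
where $p_*$ realizes (or approximates) the infimum. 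Combining these inequalities yields $s(\Delta_\mu) > 0$, contradicting the first step. Hence $\mu(\M) < \infty$.

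The main obstacle I anticipate is the first step, i.e.\ justifying $s(\Delta_\mu) = 0$ under sub-exponential $\mu$-volume growth. In the unweighted case this is Brooks' theorem and was invoked as a black box, but in the weighted case one must either quote a reference (which for this operator is available in the literature on Witten Laplacians) or reproduce the cutoff computation, taking care that the Riemannian gradient $|\nabla \varphi_r|$ is controlled independently of $V$ so that the bound reduces to one involving only $\mu(B(x_0, r))$. Once this is in place, the remaining estimates are direct applications of the weighted results already established in Proposition \ref{prop3.1.1-w} and Theorem \ref{thm-spectral-mu}, so no new ideas are needed beyond those of Section \ref{sec5}.
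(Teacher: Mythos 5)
Your proposal is correct and follows exactly the paper's argument: argue by contradiction, invoke the weighted analogue of Brooks' theorem to get $s(\Delta_\mu)=0$ under sub-exponential $\mu$-volume growth, and then use Proposition \ref{prop3.1.1-w} together with Theorem \ref{thm-spectral-mu} (applied to the bounded truncated potential $\inf\{\rho^+,w\}$) to obtain $s(\Delta_\mu)>0$, which is the same contradiction the paper derives. The paper likewise treats the weighted Brooks statement as carrying over "with the same proof," so your flagged concern there matches the paper's level of detail.
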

The proof is the same as for Theorem \ref{thm2}. Note that in the weighted case, the result of Brooks on the bottom of the spectrum still holds with the same proof as in \cite{Brooks}. This means that if the $\mu$-volume of balls has a sub-exponential  growth then $s(\Delta_\mu) = 0$. 
 
 Related results to the previous theorem are known under different geometric conditions. We refer to \cite{Xue Mei Li} and \cite{Gong-Wang}. 


\section{Examples: perturbations of radial measures on $\R^n$}\label{sec7}
In this section we provide applications of the above theory in the case of a probability measure on $\R^n$.  We obtain here spectral gap estimates for some perturbations of  exponential power distributions on $\R^n$. The density with respect to the Lebesgue measure of these distributions is given by 
\[
d\mu_\alpha=\frac{1}{Z_\alpha}e^{-\frac{|x|^\alpha}{\alpha}} dx
\]
for $\alpha >0$, where $Z_\alpha$ is a renormalisation constant such that $\mu_\alpha$ is a probability measure.

It is well known that this measure admits a spectral gap if and only if $\alpha\geq 1$.
Except for the case $\alpha=2$ where it corresponds  to the Gaussian measure, it does not satisfy the Bakry-\'Emery criterion. This is due to the lack of strict convexity of the potential $V_\alpha=\frac{|x|^\alpha}{\alpha}$  near zero in the case $\alpha>2$ and at infinity
 for $1 <\alpha<2$.  
 
  For $\alpha\geq 1$, the measure $\mu_\alpha$ is radial and log-concave. In this case,  sharp estimates of the spectral gap $\lambda_1(\mu_\alpha)$ are known (see also Theorem \ref{thm:bobkov-radial} below).

We state two results on lower bounds for the spectral gap for (not necessarily radial) perturbations of the exponential power distribution. 

%
\begin{prop}\label{prop:ex1}
Let $\alpha \geq 2$,  $a>0$ and let $R_a=(a n) ^{ \frac{1}{\alpha}}$.
Let $d\mu=e^{-V} dx$ be   a probability measure on $\R^n$  whose potential V satisfies 
\[
 \left\{ \begin{array} {c c c} 
V(x)=  \frac{|x|^\alpha}{\alpha} & \textrm{ if } & |x| \leq  R_a\\
Hess V(x) \geq c \,  (a n)^{1-\frac{2}{\alpha}} & \textrm{ if } & |x| \geq R_a\\
 \end{array}\right.
\]
for some $c \le 1$. Then there exists a constant $C$, independent of the dimension $n$, such that 
\[
  \lambda_1(\mu)  \ge C \left\{ \begin{array} {c c c} 
   \min \left(\frac{1}{4 a},\frac{1}{2}, c \right) (an)^{1-\frac{2}{\alpha}}     & \textrm{ if } & 0<a\leq 1\\
   \min \left(\frac{1}{4} ,c a^{1-\frac{2}{\alpha}}  \right) n^{1-\frac{2}{\alpha}}   & \textrm{ if } & a>1. \\
 \end{array}\right.
\]
\end{prop}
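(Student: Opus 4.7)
The plan is to apply Proposition \ref{prop3.1.1-w} to the weighted manifold $(\mathbb{R}^n, \mu)$: since $\mathbb{R}^n$ is flat, the generalised Ricci tensor reduces to $\mathrm{Hess}\,V$, and its smallest eigenvalue $\rho$ plays the role of the potential. For the radial part $V(x) = |x|^\alpha/\alpha$ on $B(0, R_a)$, a direct differentiation yields $\mathrm{Hess}\,V(x) = |x|^{\alpha-2}\bigl(I + (\alpha-2)\,x x^T/|x|^2\bigr)$, with eigenvalues $|x|^{\alpha-2}$ (multiplicity $n-1$, tangential) and $(\alpha-1)|x|^{\alpha-2}$ (radial). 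Since $\alpha \ge 2$ the minimum is $\rho(x) = |x|^{\alpha-2}$. Outside $B(0, R_a)$ the hypothesis gives $\rho(x) \ge c(an)^{1-2/\alpha}$. Noting that $R_a^{\alpha-2} = (an)^{1-2/\alpha}$ and that $c \le 1$, the two bounds combine into
\[ \rho(x) \ge W(x) := c\,\min\bigl(|x|^{\alpha-2},\, (an)^{1-2/\alpha}\bigr) \ge 0. \]
In particular $\rho^- \equiv 0$, so the form-boundedness hypothesis is trivially satisfied and Proposition \ref{prop3.1.1-w} gives $\lambda_1(\mu) \ge s(\Delta_\mu + W)$, reducing the problem to a dimension-free lower bound on a Schr\"odinger spectral bound.

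To lower-bound $s(\Delta_\mu + W)$, I would apply a Fefferman-Phong type inequality \emph{at the natural scale} $R_a$, rather than via the small-ball covering of Theorem \ref{thm-spectral-mu}. The reason is that any cover of $\mathbb{R}^n$ by balls of radius much smaller than $R_a$ has overlap $N_R$ growing exponentially in $n$, which would destroy the dimension-independence of the constant. Instead, I would decompose $\mathbb{R}^n = B(0, R_a) \cup B(0, R_a)^c$ and treat each region separately. On the exterior, the uniform convexity $\mathrm{Hess}\,V \ge c(an)^{1-2/\alpha}\,I$ gives the pointwise gain $W \ge c(an)^{1-2/\alpha}$. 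On the interior, where $\mu$ coincides with the radial log-concave measure $\mu_\alpha$, one applies the weighted Fefferman-Phong inequality \eqref{mino1} (equivalently \eqref{eq:s-rhogeqk}) with $K = B(0, R_a)$. The crucial input is the dimension-aware Neumann spectral gap of $\mu_\alpha$ on $B(0, R_a)$, which by Bobkov's Theorem \ref{thm:bobkov-radial} is of order $n^{1-2/\alpha}$ with an absolute constant; combined with the lower bound on the average $\fint_{B(0, R_a)} W\, d\mu_\alpha$ (which is of the same order because $\mu_\alpha$ concentrates on the sphere $|x| \sim n^{1/\alpha}$), this yields an interior contribution of the claimed magnitude.

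Glueing the two contributions via a standard cutoff argument and optimising constants separates the two regimes. For $0 < a \le 1$ the concentration radius $n^{1/\alpha}$ of $\mu$ exceeds $R_a$, most of the mass lies in the exterior, the uniform-convexity term $c(an)^{1-2/\alpha}$ dominates, and optimising constants produces the prefactor $\min(1/(4a),\, 1/2,\, c)$. For $a > 1$ the bulk of $\mu$ sits inside $B(0, R_a)$, the interior Bobkov-type contribution of order $n^{1-2/\alpha}$ dominates, capped by the exterior contribution $c\,a^{1-2/\alpha}\,n^{1-2/\alpha}$, yielding the $\min(1/4,\, c\,a^{1-2/\alpha})\,n^{1-2/\alpha}$ bound.

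The main obstacle is precisely keeping the constant independent of $n$. This requires two things: (i) avoiding any cover of $\mathbb{R}^n$ by balls whose radius is much smaller than $R_a$, which forces the single-domain analysis above and rules out a direct application of Theorem \ref{thm-spectral-mu}; and (ii) tracking absolute constants through Bobkov's radial estimate and the cutoff gluing, so that the specific numerical values $1/4$, $1/2$, and $1/(4a)$ emerge in the correct regime.
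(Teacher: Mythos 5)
Your proposal follows essentially the same route as the paper: reduce to $s(\Delta_\mu+\rho)$ via Proposition \ref{prop3.1.1-w}, compute $\rho(x)=|x|^{\alpha-2}$ on $B(0,R_a)$, observe that a small-ball cover would have $n$-dependent overlap number and instead partition $\R^n = B(0,R_a)\cup B(0,R_a)^c$, use the uniform lower bound on $\rho$ on the exterior, and on the interior combine the Fefferman--Phong estimate \eqref{eq:s-rhogeq0} with Bobkov's Theorem \ref{thm:bobkov-radial} (restricted to the ball) and a Laplace-method evaluation of $\fint_{B_a}\rho\,d\mu$. One caution: the final combination needs no cutoff gluing (which would introduce $|\nabla\chi|^2$ error terms that are delicate to control dimension-freely); the subadditivity estimate \eqref{eq:covering-1} over the two-piece partition, with intersection number $N=1$, directly gives $\lambda_1(\mu)\ge\min\bigl(s((\Delta_\mu+\rho)_{|B_a}),\,s((\Delta_\mu+\rho)_{|B_a^c})\bigr)$ without any loss.
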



The next result concerns the case  $1<\alpha\leq 2$. 
\begin{prop}\label{prop:ex2}
Let $1<\alpha \leq 2$, $a>0$ and $R_a= (an) ^{ \frac{1}{\alpha}}$.
Let $d\mu=e^{-V} dx$ be   a  probability measure on $\R^n$ whose potential $V$ satisfies 
\[
 \left\{ \begin{array} {c c c} 
Hess V(x) \geq c (\alpha-1) \,  (a n )^{1-\frac{2}{\alpha}} & \textrm{ if } & |x| \leq R_a\\
V(x)=  \frac{|x|^\alpha}{\alpha} & \textrm{ if } & |x| \geq R_a\\
 \end{array}\right.
\]
for some $c \le 1$. Then there exists a constant $C$, independent of the dimension $n$ such that
\[
  \lambda_1(\mu)  \ge C \left\{ \begin{array} {c c c} 
  \min\left(\frac{1}{4a}, \frac{(\alpha-1)}{2}, c (\alpha-1) \right) (an)^{1-\frac{2}{\alpha}}    & \textrm{ if } & a\geq 1 \\
   \min \left(\frac{1}{4} ,\frac{(\alpha-1)}{2}, c a^{1-\frac{2}{\alpha}} (\alpha-1)  \right) n^{1-\frac{2}{\alpha}}     & \textrm{ if } &0< a<1.\\
 \end{array}\right.
\]
\end{prop}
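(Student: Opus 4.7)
Since $\R^n$ is flat, the generalized Ricci tensor of Section \ref{sec6} reduces to $\mathcal R = \Hess V$. I would first establish that $V$ is globally convex: inside $B(0,R_a)$ by the assumed Hessian lower bound, and outside by direct differentiation of $|x|^\alpha/\alpha$, which gives
\[
\Hess V(x) = |x|^{\alpha-2}\Bigl(I - (2-\alpha)\frac{xx^T}{|x|^2}\Bigr),
\]
with eigenvalues $|x|^{\alpha-2}$ (tangential, multiplicity $n-1$) and $(\alpha-1)|x|^{\alpha-2}$ (radial). Consequently $\rho(x)\ge 0$ everywhere, so the form-boundedness condition \eqref{formbdd} is trivial (one may take its constant to be $0$). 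For $1<\alpha\le 2$ the smaller outer eigenvalue is $\rho(x) = (\alpha-1)|x|^{\alpha-2}$, which at the interface $|x|=R_a$ equals $(\alpha-1)(an)^{1-2/\alpha}$. This matches the inner bound up to the factor $c$ and singles out the truncation level $w := (\alpha-1)(an)^{1-2/\alpha}$.

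Next, I would apply Theorem \ref{thm-spectral-mu} to the truncated potential $\tilde\rho := \min(\rho, w)$: by the truncation device of Remark \ref{rk4.2}, $\lambda_1(\mu) \ge s(\Delta_\mu + \tilde\rho)$, and now $\|\tilde\rho\|_\infty \le w$. The covering radius would be taken to be $R=R_a$ when $a\ge 1$ and a smaller radius in the regime $0<a<1$. The Neumann Poincar\'e constant $\lambda_1(p,R)$ for the weighted measure $\mu$ on balls where $V$ is convex is of order $1/R^2$ by a standard log-concave estimate on Euclidean balls. Feeding these into the ratio $\kappa_R/(\kappa_R+2w)$ and estimating the $\mu$-average $\delta(R)$ then produces the three competing factors $1/(4a)$, $(\alpha-1)/2$ and $c(\alpha-1)$ in the stated minimum, arising respectively from the Poincar\'e/curvature trade-off, the outer Hessian lower bound, and the inner Hessian assumption.

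The principal obstacle is that the strong form of the average condition \eqref{ave-mu} --- $\inf_p \fint_{B(p,R)} \tilde\rho\, d\mu > 0$ --- fails here: for $|p|\to\infty$, $\tilde\rho(x)=(\alpha-1)|x|^{\alpha-2}$ tends uniformly to $0$ on $B(p,R)$, so the naive infimum over $p$ is zero. I expect this to be resolved by exploiting the concentration of $\mu$: either restrict the covering to an enlarged bulk $\{|x|\lesssim R_a\}$ and use a Lyapunov-type argument to discard tail balls, or refine the pointwise inequality \eqref{truc1} by using the fact that on each far-out ball the $\mu$-mass concentrates near the point of minimal $V$, where $\rho$ is comparatively larger. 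Tracking the dimensional dependence carefully through this localization step is the most delicate part of the proof.
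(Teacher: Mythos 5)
You set up the problem correctly: on $\R^n$ the generalized Ricci tensor is $\Hess V$, you compute its eigenvalues for the radial potential correctly, you note $\rho\ge 0$ so \eqref{formbdd} is trivial, and you correctly diagnose that a na\"ive application of Theorem \ref{thm-spectral-mu} via a ball covering of radius $R_a$ fails because $\inf_p\fint_{B(p,R)}\rho\,d\mu = 0$ for $1<\alpha<2$ (since $\rho(x)=(\alpha-1)|x|^{\alpha-2}\to 0$ as $|x|\to\infty$). That diagnosis is the right one. But your proposed fixes (Lyapunov argument to discard tail balls, or refining \eqref{truc1} ball by ball) leave a real gap: you do not actually show how to control the spectral contribution from the region $|x|\gg R_a$, and you invoke a ``standard log-concave estimate on Euclidean balls'' for the Neumann--Poincar\'e constant without specifying which one, when what is actually needed is an estimate tied to the radial structure of $\mu$.

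The paper's resolution is much more direct and bypasses Theorem \ref{thm-spectral-mu} entirely. It goes back to the raw covering inequality \eqref{eq:covering-1} with the two-piece covering $\R^n = B_a\cup B_a^c$ where $B_a=B(0,R_a)$, so $N=1$. On $B_a$, the hypothesis gives a uniform lower bound $\rho\ge c(\alpha-1)(an)^{1-2/\alpha}$, and \eqref{eq:s-rho-cst} handles that piece. On the \emph{unbounded} set $B_a^c$, the restricted measure $\mu|_{B_a^c}$ is exactly the radial log-concave measure $\propto e^{-|x|^\alpha/\alpha}$, so Theorem \ref{thm:bobkov-radial} (Bobkov's radial spectral-gap estimate, valid on $B(0,R)^c$) gives $\lambda_1(B_a^c)\gtrsim n/\fint_{B_a^c}|x|^2\,d\mu$, and a Laplace-method computation identifies this as $\sim n/R_a^2$. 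Crucially, because $\mu|_{B_a^c}$ concentrates near the sphere $|x|\approx R_a$ (the $r^{n-1}$ volume factor versus the $e^{-r^\alpha/\alpha}$ decay), one has both $\rho\lesssim a(\alpha-1)\lambda_1(B_a^c)$ uniformly on $B_a^c$ and $\fint_{B_a^c}\rho\,d\mu\sim(an)^{1-2/\alpha}$, so \eqref{eq:s-rhogeq0} applies directly on $B_a^c$ and yields the claimed bound. The missing idea in your proposal is precisely this: use one unbounded covering piece on which the measure is radial and log-concave, so that Bobkov's theorem replaces the ball-by-ball Poincar\'e inequality and the concentration of mass near $|x|=R_a$ saves the average of $\rho$ automatically, with no Lyapunov argument needed.
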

The reason to state the propositions with the constant $a$ is to consider measures which are not necessarily close in total variation to the exponential power distribution. We refer to \cite{Milman} for stability results  of the spectral gap for log-concave measures with respect to the total variation distance.

The proofs are based on the  arguments developed in the previous sections. However, instead of taking arbitrary balls in the covering of the manifold as we did before,  we will choose  a   covering which is  more suitable  to the geometry of the potential $V$.

Recall  from the previous sections that for any covering $(K_i)_{i}$ of $\R^n$ one has
\beq \label{eq:covering-1}
\lambda_1(\mu) \geq \frac{1}{N} \inf_i  s\left((L + \rho)_{|_{K_i}}\right).
\eeq
with $N$ is the intersection  number of the covering. Recall  also that if $U \geq k>0$ for some constant $k>0$ on $K$, then 
\beq\label{eq:s-rho-cst}
s\left((\Delta_\mu + U)_{|_K}\right) \geq k
\eeq
whereas  if only $U\geq 0$ we have as in \eqref{mino1-2}
\beq\label{eq:s-rhogeq0}
s\left((\Delta_\mu+ U_{|_K}\right) \geq \frac{1}{2} \frac{1}{\mu(K)}\int_K \min\left(\frac{\lambda_1(K)}{2} , U(x)\right) d\mu(x).
\eeq
 
In order to prove  the last two propositions let us recall the following estimate for the spectral gap of radial measures (see \cite{Bobkov} and   \cite{Bonnefont-Joulin-Ma}).
\begin{thm}\label{thm:bobkov-radial}
Let $\mu$ be a radial log-concave probability measure on $\R^n$, then $\mu$ admits a spectral gap $\lambda_1(\mu)$ and 
\[
\frac{n-1}{\int_{\R^n} |x|^2 d\mu(x)}  \leq \lambda_1(\mu) \leq \frac{n}{\int_{\R^n} |x|^2 d\mu(x)}. 
\]
\end{thm}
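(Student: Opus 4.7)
The plan is to handle the upper and lower bounds separately, using two different techniques.

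The upper bound is a direct test-function argument. I would choose $f(x) = x \cdot e$ for some unit vector $e \in \R^n$. Radial symmetry of $\mu$ forces $\int f \, d\mu = 0$, and rotational invariance gives $\int (x \cdot e)^2 \, d\mu = \frac{1}{n}\int |x|^2 \, d\mu$. Since $\int |\nabla f|^2 \, d\mu = 1$, the variational characterisation of $\lambda_1(\mu)$ immediately yields $\lambda_1(\mu) \le n / \int |x|^2 \, d\mu$.

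For the lower bound my plan is to exploit the radial structure via polar coordinates. Write $d\mu = \rho(r) \, r^{n-1} \, dr \, d\sigma(\theta)$ on $(0,\infty) \times S^{n-1}$, with $\sigma$ the uniform probability on the sphere, and split any centered $f \in H^1(\mu)$ orthogonally as $f = f_0 + f^\perp$, where $f_0(r) := \int_{S^{n-1}} f(r\theta) \, d\sigma(\theta)$ is the spherical mean and $f^\perp$ has vanishing spherical mean at each $r$. The decomposition $|\nabla f|^2 = (\partial_r f)^2 + r^{-2} |\nabla_{S^{n-1}} f|^2$ makes the Dirichlet energy additive over the two pieces, so it is enough to prove Poincaré inequalities on each subspace separately with constant at most $\int |x|^2 \, d\mu / (n-1)$. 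For the non-radial piece, the pointwise spherical Poincaré inequality $(n-1) \int_{S^{n-1}} |f^\perp|^2 d\sigma \le \int_{S^{n-1}} |\nabla_{S^{n-1}} f^\perp|^2 d\sigma$ (sharp, extremised by linear forms on the sphere) combined with $|\nabla f|^2 \ge r^{-2} |\nabla_{S^{n-1}} f|^2$ and integration against $\rho(r) r^{n-1} \, dr$ is the key input. For the radial piece $f_0(r)$, I would apply the one-dimensional Poincaré inequality to the density $\rho(r) r^{n-1}$ on $(0,\infty)$, which is log-concave in $r$ because log-concavity of $\rho$ together with concavity of $(n-1)\log r$ give $(\log \rho + (n-1)\log r)'' \le 0$; its 1D Poincaré constant is controlled by its variance.

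The main obstacle I expect is the conversion step in the non-radial bound: the pointwise spherical inequality introduces a weight $r^2$ which must be traded for the global moment $\int |x|^2 \, d\mu$ without degrading the sharp constant $n-1$. A crude Cauchy--Schwarz loses a factor, and the correct argument requires log-concavity of $\mu$ in order to compare any non-radial candidate to the extremal linear functional, for which both the spherical Poincaré constant $n-1$ and the upper-bound test function argument are saturated. Once the radial and non-radial estimates are glued with matching constant $\int |x|^2 \, d\mu / (n-1)$, one obtains $\lambda_1(\mu) \ge (n-1)/\int |x|^2 \, d\mu$, and the gap $n$ versus $n-1$ between the two bounds is exactly the spectral-gap loss when passing from $\R^n$ to the unit sphere $S^{n-1}$ whose first non-zero eigenvalue is $n-1$.
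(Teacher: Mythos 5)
First, a structural note: the paper does not prove Theorem~\ref{thm:bobkov-radial} at all; it is quoted from the cited references (Bobkov and Bonnefont--Joulin--Ma), so there is no ``paper's own proof'' to compare against. I therefore assess your proposal on its own terms.

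Your upper bound is correct and complete. Your lower-bound skeleton (split $f = f_0 + f^\perp$ by spherical averaging; additivity of both variance and Dirichlet energy; spherical Poincar\'e for the angular part; a 1D inequality for the radial part) is also the right shape, and matches the structure of the literature argument. However, both halves have genuine gaps. For the angular part, the gap you yourself flag is real: after integrating the spherical Poincar\'e you obtain $(n-1)\int|f^\perp|^2\,d\mu \le \int|\nabla_\sigma f^\perp|^2\,d\mu$ with $\nabla_\sigma$ the unit-sphere gradient, whereas the Dirichlet form contains $\int r^{-2}|\nabla_\sigma f^\perp|^2\,d\mu$. Writing $h=|\nabla_\sigma f^\perp|^2$, the required step $\int h\,d\mu \le \left(\int|x|^2 d\mu\right)\int r^{-2}h\,d\mu$ is the assertion $E_{\tilde\mu}[R^2]\le E_\mu[R^2]$ for $\tilde\mu \propto (h/r^2)\,d\mu$; this is a correlation inequality that fails for general $h$, and invoking log-concavity plus ``compare to the extremal linear form'' does not close it. The actual arguments in the literature decompose $f^\perp$ further into spherical harmonics of each degree $k\ge 1$ and work with the resulting 1D Sturm--Liouville operators carrying the centrifugal term $k(k+n-2)/r^2$.

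The radial part has a second gap you do not seem to notice. ``Its 1D Poincar\'e constant is controlled by its variance'' is a universal bound $C_P(\nu)\le K\,\mathrm{Var}(\nu)$ for 1D log-concave measures, with $K\ge 4$ (forced by the one-sided exponential on $(0,\infty)$, which has $\lambda_1=1/4$ and variance $1$). But the theorem demands the specific constant $\frac{1}{n-1}\int|x|^2\,d\mu$, and the variance bound is not strong enough to give it. Concretely, for $\mu=c\,e^{-|x|}\,dx$ on $\R^2$, the radial marginal is $\nu_2=re^{-r}\,dr$ with $\mathrm{Var}(\nu_2)=2$ and $\int|x|^2\,d\mu = 6$; you need $C_P(\nu_2)\le 6$, but the variance-based bound gives at best $4\cdot 2=8$, and this is not slack in your argument (one can check $C_P(\nu_2)\ge 4$ via test functions $e^{ar}$, $a\to 1/2^-$). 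To get the sharp dimensional constant on the radial piece one must exploit the Jacobian $r^{n-1}$ explicitly, for instance via the convexity $(-\log(\rho(r)r^{n-1}))''\ge (n-1)/r^2$, combined with the angular estimate --- a generic ``variance controls Poincar\'e'' step is not sufficient.
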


It is important to point out that this   theorem remains valid (without any modification) if $\R^n$ is replaced by a ball $B(0,R)$ for some $R>0$ or by its complement $B(0,R)^c$. We will use the above result in these later situations. 

Note also that in the case  of dimension 1, the lower bound in the above estimate is not good, but in this case the spectral gap is bounded  from below by $\frac{1}{12}\frac{1}{\int_{\R} |x|^2 d\mu(x)}$ (see \cite{bobkov99}).

 \begin{proof}[Proof of Proposition \ref{prop:ex1}]
 By Proposition \ref{prop3.1.1-w} we have 
 \[
 \lambda_1(\mu) := \lambda_1(\Delta_\mu) \ge s(\Delta_\mu + \rho).
 \]
 We estimate from below the spectral bound $s(\Delta_\mu + \rho)$.

 The result for $a\geq 1$ follows from the case  $a=1$ by replacing the constant  $c$  by $\min (c a^{1-\frac{\alpha}{2}},1)$). Hence we  only consider the case $0<a\leq 1$.
 
 By replacing  $a$ by $\frac{a n}{n-\alpha}$ we may rewrite $R_a$ as $R_a=  (a(n-\alpha)) ^{ \frac{1}{\alpha}}$. 
 We first recall that for $|x|\leq R_a $ 
 \[
 Hess V(x)= |x|^{\alpha-2} \left( Id + (\alpha-2) \frac{x x^T}{|x|^2} \right)
\]
and thus $\rho(x)= |x|^{\alpha-2}$ for $|x|\leq R_a $.

We  take the following  simple covering (with covering number $N = 1$) 
\[
\R^n= B(0,R_a) \cup B(0,R_a)^c = B_a \cup B_a^c.
\]
We estimate separately $s\left((\Delta_\mu+\rho)_{|_{B_a^c}}\right)$ and $s\left((\Delta_\mu+\rho)_{|_{B_a}}\right)$.  The first one is very easy since $\rho$ is uniformly bounded from below and so by  \eqref{eq:s-rho-cst},
\beq\label{eq:est-L1Bc}
s\left((\Delta_\mu+\rho)_{B_a^c}\right) \geq c ( a(n-\alpha))^{1-\frac{2}{\alpha}}.
\eeq
Next we estimate $s\left((L+\rho)_{B_a}\right)$. Since the potential vanishes at 0, we use \eqref{eq:s-rhogeq0}.
It is radial, log-concave on $B_a$ and  proportional to  the exponential power distribution on $B_a$. Thus, by Theorem \ref{thm:bobkov-radial}, 
\beq\label{eq:est-bobkov-L1B}
\lambda_1(B_a) \geq \frac{n-1}{\fint_{B_a} |x|^2 d\mu_\alpha(x)} \geq \frac{n-1}{R_a^2}. 
\eeq
Moreover, for $x\in B_a$ and  $0<a\leq 1$,
\[
\rho(x)= |x|^{\alpha-2} \leq R_a^{(\alpha-2)} \leq a  \frac{n-1}{R_a^2} \leq a {\lambda_1(B_a)}.
\]
Therefore, by \eqref{eq:s-rhogeq0},
\[
s\left((\Delta_\mu+\rho)_{B_a}\right) \geq  \frac{1}{2}  \fint_{B_a} \min\left(\frac{1}{2a},1\right)  \rho(y) d\mu(y).
\]
We now provide an estimate  of this last quantity as $n$ tends to infinity. Set $I_{\alpha,n, R, \gamma}:=\int _{r=0}^{R}r^{\gamma} r^{n-1} e^{- \frac{r^\alpha }{\alpha}}dr$, one has
\[\fint_{B_a}  \rho(y) d\mu(y)=\frac{I_{\alpha,n, R_a, \alpha-2}}{I_{\alpha,n, R_a, 0}}.\]

By two  changes of variables ($y=\frac{r^\alpha}{\alpha}$ and  $y =\frac{n-\alpha}{\alpha} u$), one easily obtains
\begin{eqnarray*}
I_{\alpha,n, R, \gamma} &=&  \alpha^{\frac{\gamma}{\alpha} + \frac{n-\alpha}{\alpha}} \int_0^{\frac{R^\alpha}{\alpha}} y^{\frac{\gamma}{\alpha} + \frac{n-\alpha}{\alpha}}
e^{-y}dy\\
&=&  \frac{1}{\alpha} (n-\alpha)^{\frac{\gamma}{\alpha} + \frac{n}{\alpha}}  \int_0^{\frac{R^\alpha}{n-\alpha}} \exp\left(- \left(\frac{n-\alpha}{\alpha}\right) (u-\ln u) \right) u^{\frac{\gamma}{\alpha}} du.
\end{eqnarray*}
Hence
\[
I_{\alpha,n, R_a, \gamma}= \frac{1}{\alpha}(n-\alpha)^{\frac{\gamma}{\alpha} + \frac{n}{\alpha}} \int_0^{a} \exp\left(- \left(\frac{n-\alpha}{\alpha}\right)\psi(u) \right) f_\gamma (u)du.
\]
with $\psi(u)=u-\ln(u)$ and $f_\gamma(u)= u^{\frac{\gamma}{\alpha}} $. The minimum of $\psi$ is attained for $u=1$ and using the standard Laplace method with 
$J_{\alpha,n,a,\gamma}= \int_0^{a} \exp\left(- \left(\frac{n-\alpha}{\alpha}\right)\psi(u) \right) f_\gamma (u)du$, one has 
\[
\frac{J_{\alpha,n, a, \alpha-2}}{J_{\alpha,n, a, 0}} \sim_{n \to +\infty}  
\left\{ \begin{array}{c c c }
\frac{f_{\alpha-2}(1)}{f_0(1)} = 1 & \textrm{ if } a\geq 1\\
\frac{f_{\alpha-2}(a)}{f_0(a)} = a^{1-\frac{2}{\alpha}} & \textrm{ if } 0<a<1.\\
\end{array}\right.
\]
Therefore,  
\beq\label{eq:est-int-rho}
\fint_{B_a}  \rho(y) d\mu(y) \sim \left\{ \begin{array}{c c c }
  ( a n)^{1-\frac{2}{\alpha}}\sim R_a^{\alpha-2} & \textrm{ if }& 0<a<1\\
   n^{1-\frac{2}{\alpha}}\sim R_1^{\alpha-2} & \textrm{ if }& a\geq 1.\\
\end{array}\right.
\eeq
This implies a lower bound for  $s\left((\Delta_\mu+\rho)|_{B_a}\right)$ as follows
\beq\label{eq:est-L1B}
s\left((\Delta_\mu+\rho)|_{B_a}\right) \gtrsim \left\{ \begin{array}{c c c }
 \min\left(\frac{1}{4a},\frac{1}{2}\right)  ( a (n-\alpha))^{1-\frac{2}{\alpha}} & \textrm{ if }& 0<a<1\\
 \frac{1}{4} ((n-\alpha))^{1-\frac{2}{\alpha}} & \textrm{ if }& a=1.\\
\end{array}\right.
\eeq
The implicit constant in this lower bound is independent of $n$. 
Combining  \eqref{eq:est-L1B} and \eqref{eq:est-L1Bc} finishes  the proof of the proposition. 
\end{proof}

The proof of Proposition \ref{prop:ex2} is similar,  we only have to change  the roles of $B_a$ and $B_a^c$.
\begin{proof}[Proof of Proposition \ref{prop:ex2}]
 The result for $0<a <1$ will follow from the one for $a= 1$ by replacing  $c$ by  $\min \left(c a^{1-\frac{2}{\alpha}},1\right)$. We consider  only  the cases $ a\geq 1$.
 We set $R_a=  (a(n-\alpha)) ^{ \frac{1}{\alpha}}$.
 We first recall that for $|x|\geq R_a $ 
 \[
 Hess V(x)= |x|^{\alpha-2} \left( Id + (\alpha-2) \frac{x x^T}{|x|^2} \right)
\]
and thus $\rho(x)=(\alpha-1) |x|^{\alpha-2}$ for $|x|\geq R_a $. 

We take again the  covering
\[
\R^n= B(0,R_a) \cup B(0,R_a)^c = B_a \cup B_a^c.
\]
The estimate of $s\left((\Delta_\mu+\rho)_{|_{B_a}}\right)$ is obvious  since $\rho$ is uniformly bounded from below on 
$B_a$ and hence
\beq\label{eq:estimate-B2}
s\left((L+\rho)|_{B_a}\right) \geq c (\alpha-1) (a(n-\alpha))^{1-\frac{2}{\alpha}}.
\eeq

To estimate $s\left((\Delta_\mu+\rho)_{|_{B_a^c}}\right)$, we first use Theorem \ref{thm:bobkov-radial} and  obtain 
\[
 \lambda_1(B_a^c) \sim \frac{n}{\fint_{B_a^c} |x|^2 e^{-\frac{|x|^\alpha}{\alpha}} dx}= \frac{n\tilde I_{\alpha,n, R_a, 0}}{\tilde I_{\alpha,n, R_a, 2}}
\]
with $
\tilde I_{\alpha,n, R, \gamma}:=\int_R^\infty r^\gamma r^{n-1}e^{-\frac{r^\alpha}{\alpha}} dr$. Note that 
\[
\tilde I_{\alpha,n, R_a, \gamma}= \frac{1}{\alpha} (n-\alpha)^{\frac{\gamma}{\alpha} + \frac{n}{\alpha}} \int_{a}^\infty \exp\left(- \left(\frac{n-\alpha}{\alpha}\right)\psi(u) \right) f_\gamma (u)du.
\]
The Laplace method  gives 
\[
\fint_{B_a^c} |x|^2 e^{-\frac{|x|^\alpha}{\alpha}} dx \sim_{n\to \infty}
\left\{ \begin{array}{c c c }
 ( a n)^{\frac{2}{\alpha}}\sim R_a^2 & \textrm{ if }& a\geq 1\\
  (n)^{\frac{2}{\alpha}} \sim R_1^2& \textrm{ if }& 0<a\leq 1.\\
\end{array}\right.
\]
Hence
\[
 \lambda_1(B_a^c) \sim \left\{ \begin{array}{c c c }
  \frac{ n^{1-\frac{2}{\alpha}} }  {a^{\frac{2}{\alpha}}} \sim \frac{n}{R_a^2}& \textrm{ if }& a\geq 1\\
    n^{1-\frac{2}{\alpha}} \sim \frac{n}{R_1^2}& \textrm{ if }& a\leq  1.\\
\end{array}\right.
\]
Thus for $|x|\geq R_a$ and $a\geq 1$
\[
\rho(x) \leq \frac{\alpha-1}{R_a^{2-\alpha} } \lesssim a(\alpha-1) \lambda_1(B_a^c).
\]
By \eqref{eq:s-rhogeq0},

\[
s\left((\Delta_\mu+\rho)_{|_{B_a^c}}\right) \gtrsim \frac{1}{2} \min\left(\frac{1}{2a},(\alpha-1)\right) \fint_{B_a^c} |x|^{\alpha-2} d\mu_\alpha(x).
\]
As before, 
\[
\fint_{B_a^c} |x|^{\alpha-2} d\mu_\alpha(x)=  \frac{\tilde I_{\alpha,n, R_a, \alpha-2}}{\tilde I_{\alpha,n, R_a, 0}} \sim
 \left\{ \begin{array}{c c c }
 {(a n)^{1-\frac{2}{\alpha}} }  & \textrm{ if }& a\geq 1\\
    n^{1-\frac{2}{\alpha}} & \textrm{ if }& 0<a\leq 1\\
\end{array}\right.
\]
and the result follows.
\end{proof}

\begin{rmk}
For the particular case of the exponential power distribution $\mu_\alpha$  for $\alpha > 1$   Proposition \eqref{prop:ex1} and Proposition\eqref{prop:ex2} (now $c=1$) give the good asymptotic for the spectral gap $\lambda_1(\mu_\alpha)$ up to a factor $\frac{1}{4}$ in the case $\alpha \geq 2$ 
and   a factor $\min\left(\frac{1}{4},\alpha-1 \right)$ in the case $1<\alpha \leq 2$.

\end{rmk}

\end{document}